\newcommand{\ds}{\displaystyle}
\newcommand{\Nb}{{\mathbb{N}}}
\newcommand{\Rb}{{\mathbb{R}}}
\newcommand{\Zb}{{\mathbb{Z}}}
\newcommand{\A}{{\mathcal{A}}}
\newcommand{\C}{{\mathcal{C}}}
\newcommand{\F}{{\mathcal{F}}}
\newcommand{\K}{{\mathcal{K}}}
\newcommand{\LL}{{\mathcal{L}}}
\newcommand{\HH}{{\mathcal{H}}}
\newcommand{\M}{{\mathcal{M}}}
\newcommand{\med}{- \hskip -1em \int}
\newcommand{\res}{\mathop{\hbox{\vrule height 7pt width .5pt depth 0pt
\vrule height .5pt width 6pt depth 0pt}}\nolimits}
\def\hom{\text{hom}}
\def\dist{\text{dist}}
\let\e=\varepsilon
\let\O=\Omega
\let\o=\omega
\let\G=\Gamma
\let\a=\alpha
\let\b=\beta
\let\d=\delta
\let\wto=\rightharpoonup
\begin{document}

\markboth{J.-F. Babadjian \& V. Millot} {Homogenization of
variational problems in manifold valued Sobolev spaces}

\title{HOMOGENIZATION OF VARIATIONAL PROBLEMS\\ IN MANIFOLD VALUED SOBOLEV SPACES}
\author{Jean-Fran\c{c}ois BABADJIAN}

\address{Laboratoire Jean Kuntzmann\\
Universit\'e Joseph Fourier\\\
BP 53\\
38041 Grenoble Cedex 9, France\\
\emph{\tt{babadjia@imag.fr}}}

\author{Vincent MILLOT}

\address{D\'epartement de Math\'ematiques\\
Universit\'e de Cergy-Pontoise\\
Site de Saint Martin\\
2 avenue Adolphe Chauvin\\
95302 Cergy-Pontoise Cedex, France\\
\emph{\tt{vmillot@math.u-cergy.fr}}}

\maketitle

\begin{abstract}
{\bf Abstract.} Homogenization of integral functionals is studied
under the constraint that admissible maps have to take their values
into a given smooth manifold. The notion of tangential
homogenization is defined by analogy with the tangential
quasiconvexity introduced by Dacorogna, Fonseca, Mal\'y \& Trivisa
\cite{DFMT}. For energies with superlinear or linear growth, a
$\Gamma$-convergence result is established in Sobolev spaces, the
homogenization problem in the space of functions of bounded
variation being the object of \cite{BM}.
\end{abstract}

\keywords{Homogenization, $\Gamma$-convergence, manifold valued
maps.}

\ccode{Mathematics Subject Classification 2000: 74Q05; 49J45;
49Q20.}

%49J45: Methods involving semicontinuity and convergence; relaxation
%49Q20: Variational problems in a geometric measure-theoretic setting
%74Q05 Homogenization in equilibrium problems

%%%%%%%%%%%%%%%%%%%%%%%%%%%%%%%%%%%%%%%%%%%%%%%%%%%%%%%%%%%

\section{Introduction}

\noindent The homogenization theory aims to find an effective
description of materials whose heterogeneities scale is much
smaller than the size of the body. The simplest example is
periodic homogenization for which the microstructure is assumed to
be periodically distributed within the material. In the framework of
the Calculus of Variations, periodic homogenization problems rest on
the study of equilibrium states, or minimizers, of integral
functionals of the form
\begin{equation}\label{mainfunct}\int_\O f\left(\frac{x}{\e},\nabla
u\right)dx\,,\quad u : \O \to \Rb^d\,,\end{equation} under suitable
boundary conditions, where $\O \subset \Rb^N$ is a bounded open set
and $f:\Rb^N \times \Rb^{d \times N} \to [0,+\infty)$ is some
oscillating integrand with respect to the first variable. To
understand the asymptotic behavior of (almost) minimizers of
such energies, it is convenient to perform a $\G$-convergence
analysis (see \cite{DM} for a detailed description of this subject)
which is an adequate theory to study such variational problems.
It is usual to assume that the integrand $f$ satisfies uniform
$p$-growth and $p$-coercivity conditions (with $1\leq p <+\infty$)
so that one should ask the admissible fields to belong to the Sobolev
space $W^{1,p}$.  For energies with  superlinear growth, {\it i.e.}, $p>1$,
this problem has a quite long history, and we refer to \cite{Mar}
in the convex case. Then it has received the most general answer in the independent works of
\cite{Br} and \cite{M}, showing that such materials asymptotically
behave like homogeneous ones. These results have been subsequently
generalized into a lot of different manners. Let us mention \cite{BDV} where the authors add a surface
energy term allowing for fractured media. In that case, Sobolev spaces
are not adapted to take into account eventual
discontinuities of the deformation field across the cracks.

In many applications admissible fields have to satisfy additional
constraints. This is for example the case in the study of equilibria
for liquid crystals, in ferromagnetism or for magnetostrictive
materials where the order parameters take their values into a given
manifold. It then becomes necessary to understand the
behaviour of integral functionals of the type (\ref{mainfunct})
under this additional constraint.
For fixed $\e>0$, the possible lack of lower semicontinuity of
the energy may prevent the existence of
 minimizers (with eventual boundary
conditions). It leads to compute its relaxation under the manifold
constraint. In the framework of Sobolev spaces, it has been studied
in \cite{DFMT,AL}, and the relaxed energy is obtained by replacing
the integrand by its tangential quasiconvexification which is the
analogue of the quasiconvex envelope in the non constrained case. We
finally mention a slightly different problem originally introduced
in \cite{BCL,BBC}, where the energy is assumed to be finite only for
smooth maps. Recent generalizations can be found in \cite{GM2} where
the study is performed within  the framework of Cartesian Currents
(see \cite{GMS}). It shows the emergence in the relaxation process
of non local effects of topological nature related to the non
density of smooth maps (see \cite{B,BZ}). \vskip5pt

The aim of this paper is to treat the problem of manifold
constrained homogenization, {\it i.e.}, the asymptotic as $\e\to0$
of energies of the form (\ref{mainfunct}) defined on manifold valued
Sobolev spaces. Let us make the idea more precise. We consider a
connected smooth submanifold $\M$ of $\Rb^d$ without
boundary. The tangent space of $\M$ at a point $s\in\M$ will be denoted by $T_s(\M)$.
The class of admissible maps we are interested in is
defined as
$$W^{1,p}(\O;\M):=\big\{ u \in W^{1,p}(\O;\Rb^d) : \; u(x) \in \M \text{ for $\LL^N$-a.e. }x \in \O\big\}\,.$$
For a smooth $\M$-valued map, it is well known that first order derivatives belong to the tangent  space of $\M$.
For $u\in W^{1,p}(\O;\M)$, this property still holds in the sense that $\nabla u(x)\in [T_{u(x)}(\M)]^N$ for $\LL^N$-a.e. $x\in\O$.
\vskip5pt

The energy density $f : \Rb^N \times \Rb^{d
\times N} \to [0,+\infty)$ is assumed to be a Carath\'eodory integrand satisfying
\begin{itemize}
\item[$(H_1)$] for every $\xi \in
\Rb^{d \times N}$ the function $f(\cdot,\xi)$ is $1$-periodic, {\it
i.e.}, if $\{e_1,\ldots,e_N\}$ denotes the canonical basis
of $\Rb^N$, one has $f(y+e_i,\xi)=f(y,\xi)$ for every $i=1,\ldots,N$ and $y \in \Rb^N$;\\
\item[$(H_2)$] there exist $0<\a \leq \b < +\infty$ and $1 \leq p<+\infty$
such that
$$\a |\xi|^p \leq f(y,\xi)\leq \b(1+|\xi|^p) \quad \text{ for a.e. }y \in \Rb^N \text{ and all }
\xi \in \Rb^{d \times N}\,.$$
\end{itemize}
For $\e>0$, we define the functionals $\F_\e:L^p(\O;\Rb^d) \to
[0,+\infty]$ by
$$\F_\e(u):=\begin{cases} \ds \int_\O f\left(\frac{x}{\e},\nabla
u\right) dx & \text{if }u \in
W^{1,p}(\O;\mathcal{M})\,,\\[8pt]
+\infty & \text{otherwise}\,.
\end{cases}$$
\vskip5pt

For energies with superlinear growth, we have the following result.

\begin{theorem}\label{babmil}
Let $\M$ be a connected smooth submanifold of $\Rb^d$ without
boundary, and $f:\Rb^N \times \Rb^{d \times N} \to [0,+\infty)$ be a
Carath\'eodory function satisfying $(H_1)$ and $(H_2)$ with
$1<p<+\infty$. Then the family $\{\F_\e\}_{\e>0}$ $\G$-converges for
the strong $L^p$-topology to the functional $\F_{\rm hom} :
L^p(\O;\Rb^d) \to [0,+\infty]$ defined by
$$\F_{\rm hom}(u):=\begin{cases} \ds \,\int_\O Tf_{\rm hom}(u,\nabla
u)\, dx & \text{if $u \in W^{1,p}(\O;\mathcal{M})$}\,,\\[8pt]
\,+\infty & \text{otherwise}\,,
\end{cases}$$
where for every $s\in \M$ and $\xi\in [T_s(\M)]^N$,
\begin{equation}\label{Tfhom}
Tf_{\rm hom}(s,\xi)=\lim_{t\to+\infty}\inf_{\varphi} \bigg\{
\med_{(0,t)^N} f(y,\xi+ \nabla \varphi(y))\, dy :  \varphi \in
W^{1,\infty}_0((0,t)^N;T_s(\mathcal{M})) \bigg\}
\end{equation}
is the tangentially homogenized energy density.
\end{theorem}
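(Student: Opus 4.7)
The proof splits into the $\G$-liminf and $\G$-limsup inequalities. Compactness of the domain of the $\G$-limit within $W^{1,p}(\O;\M)$ is immediate from the $p$-coercivity of $f$ in $(H_2)$ (giving weak $W^{1,p}$-compactness), combined with pointwise a.e.\ convergence of a subsequence and closedness of $\M$. Note that a direct application of the classical unconstrained theorem of Braides and M\"uller only yields the $\G$-limit $\int f_{\rm hom}(\nabla u)\,dx$; since the minimisation in (\ref{Tfhom}) is performed over the strictly smaller class of test functions valued in $T_s(\M)$, one generally has $Tf_{\rm hom}(s,\cdot) > f_{\rm hom}$, so both inequalities below must genuinely exploit the manifold constraint.

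For the $\G$-liminf I would use the blow-up method of Fonseca and M\"uller. Given $u_\e\to u$ in $L^p$ with $\sup_\e\F_\e(u_\e)<+\infty$, the positive measures $\mu_\e:=f(x/\e,\nabla u_\e)\,\LL^N\res\O$ admit, up to a subsequence, a weak-$*$ limit $\mu$, and it suffices to show $\tfrac{d\mu}{d\LL^N}(x_0)\geq Tf_{\rm hom}(u(x_0),\nabla u(x_0))$ at $\LL^N$-a.e.\ $x_0$. Fix a Lebesgue point $x_0$ of $u$, $\nabla u$ and $\tfrac{d\mu}{d\LL^N}$, set $s=u(x_0)$ and $\xi=\nabla u(x_0)\in [T_s(\M)]^N$, and choose radii $r_\e\to 0$ with $t_\e:=r_\e/\e\to+\infty$. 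Define the rescaled fluctuation on $(0,t_\e)^N$ by
$$\varphi_\e(y):=\frac{u_\e(x_0+\e y)-s}{\e}-\xi y,$$
split it as $\varphi_\e=\varphi_\e^T+\varphi_\e^\perp$ orthogonally with respect to $T_s(\M)$, and observe that $\varphi_\e^\perp$ has controlled $L^p$-size, since the distance from $u_\e\in\M$ to the affine tangent plane $s+T_s(\M)$ is quadratic in $u_\e(x)-s$. A De Giorgi slicing near the boundary of the cube then turns (the tangential part of) $\varphi_\e$ into an admissible competitor for (\ref{Tfhom}) with energy asymptotically controlled by the blow-up average $r_\e^{-N}\mu_\e(Q_{r_\e}(x_0))$, and passing to the limit in $\e$ (then in $r_\e$) yields the pointwise lower bound.

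For the $\G$-limsup I would first construct a recovery sequence for smooth $\M$-valued maps $u\in C^\infty(\bar\O;\M)$, then extend by density. Partition $\O$ into small cubes $Q_i$ of side $\delta$ with centres $x_i$, and on each $Q_i$ write $s_i=u(x_i)$ and $\xi_i=\nabla u(x_i)\in[T_{s_i}(\M)]^N$. Picking, via (\ref{Tfhom}), almost minimising $\varphi_i\in W^{1,\infty}_0((0,t)^N;T_{s_i}(\M))$, extending periodically, and setting
$$v_\e(x):=u(x)+\e\,\varphi_i\!\left(\frac{x-x_i}{\e}\right)\quad\text{on }Q_i,$$
one projects onto $\M$ via the smooth nearest-point map $\pi_\M$, which is defined in a tubular neighbourhood of $\M$ containing $\{v_\e(x)\}$ for $\e$ small. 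The resulting $u_\e:=\pi_\M\circ v_\e\in W^{1,p}(\O;\M)$ converges strongly to $u$ in $L^p$, and using the identity $D\pi_\M(z)=P_{T_z(\M)}$ for $z\in\M$ together with the tangential nature of $\xi_i$ and of $\nabla\varphi_i$, a first-order Taylor expansion shows that $\nabla u_\e(x)$ agrees with $\xi_i+\nabla\varphi_i((x-x_i)/\e)$ up to terms which vanish in $L^p$ as $\e\to 0$. Summing over $i$ and invoking $(H_2)$, one obtains
$$\limsup_{\e\to 0}\F_\e(u_\e)\;\leq\;\sum_i\delta^N \med_{(0,t)^N}\! f(y,\xi_i+\nabla\varphi_i(y))\,dy + o_\delta(1),$$
which tends to $\int_\O Tf_{\rm hom}(u,\nabla u)\,dx$ as $t\to+\infty$ and $\delta\to 0$; a diagonal extraction delivers the recovery sequence.

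The main obstacle is the projection step in the blow-up argument for the liminf: the fluctuation $\varphi_\e$ takes values in $\Rb^d$, and one must extract from it a competitor for (\ref{Tfhom}) valued in the single tangent plane $T_s(\M)$ without losing more than $o(1)$ in energy. This requires simultaneously controlling the $L^p$-size of the normal part $\varphi_\e^\perp$ (through the closeness of $u_\e$ to $s$ and the quadratic contact of $\M$ with its tangent plane) and the slicing error from the Dirichlet adjustment on the boundary, which must remain compatible with the coercivity bound. A secondary difficulty lies in the limsup: since smooth $\M$-valued maps may fail to be dense in $W^{1,p}(\O;\M)$ for topological reasons, the extension step should rely either on a piecewise-affine-in-tangent-plane approximation adapted to manifold-valued maps or on a direct cellwise construction bypassing global smoothness.
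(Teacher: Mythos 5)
Both halves of your outline have genuine gaps at precisely the points you flag as obstacles, and the paper resolves them by different means than you propose.

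For the liminf, the projection-plus-quadratic-contact plan does not close. You split the rescaled fluctuation as $\varphi_\e=\varphi_\e^T+\varphi_\e^\perp$ and argue that $\varphi_\e^\perp$ is small because $\M$ has second-order contact with $s_0+T_{s_0}(\M)$; but this requires the pointwise bound $|u_\e-s_0|\lesssim\rho$ on the blow-up cube, whereas at a Lebesgue point you only control $u_\e-s_0$ in $L^p$, and neither $u_\e$ nor $u$ need be uniformly close to $s_0$ there. Moreover what must be controlled to obtain a $W^{1,\infty}_0$ (or even $W^{1,p}_0$) competitor for~\eqref{Tfhom} is not $|\varphi_\e^\perp|$ but essentially $|\nabla\varphi_\e^\perp|$, i.e.\ the component of $\nabla u_\e(x)\in[T_{u_\e(x)}(\M)]^N$ normal to the fixed plane $T_{s_0}(\M)$, and that component is $O(|\nabla u_\e|\cdot|\mathbf{P}_{u_\e(x)}-\mathbf{P}_{s_0}|)$, hence large wherever $u_\e$ is far from $s_0$ --- precisely the set you cannot rule out. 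The paper never projects: it introduces $\bar f(y,s,\xi)=f(y,\mathbf{P}_s(\xi))+|\xi-\mathbf{P}_s(\xi)|^p$ on the whole $\Rb^{d\times N}$, proves in Proposition~\ref{properties1} that its \emph{unconstrained} homogenized density equals $Tf_{\rm hom}$ on the constraint set, observes that $f(x/\e_n,\nabla u_n)=\bar f(x/\e_n,u_n,\nabla u_n)$ along any $\M$-valued sequence, and then performs the blow-up on $\bar f$. The Decomposition Lemma gives $p$-equi-integrable gradients, Scorza--Dragoni handles freezing $u_n$ at $s_0$, and the standard unconstrained theorem of Braides/M\"uller then yields $\bar f_{\rm hom}(s_0,\xi_0)$. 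This is the mechanism that lets the unconstrained theory re-enter despite your (correct) observation that $Tf_{\rm hom}\neq f_{\rm hom}$ in general.

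For the limsup, you construct a cellwise recovery sequence for $u\in C^\infty(\overline\O;\M)$ and then ``extend by density'', while acknowledging that smooth $\M$-valued maps may fail to be dense in $W^{1,p}(\O;\M)$. That failure (B\'ethuel, B\'ethuel--Zheng, cited in the paper) is not a secondary difficulty: for $p<N$ and a generic target the approximation step is simply unavailable, so your argument does not produce a recovery sequence for a general $u\in W^{1,p}(\O;\M)$. The paper's route avoids density altogether: Lemma~\ref{meas} shows that a suitably truncated $\Gamma$-limsup functional $\F^{\{\e_k\}}_\K(u,\cdot)$ is (the trace of) a Radon measure absolutely continuous with respect to $\LL^N$, and Lemma~\ref{upperbound} then bounds its Radon--Nikod\'ym derivative at $\LL^N$-a.e.\ Lebesgue point of $u$ and $\nabla u$ by the local construction $u_k=\Pi\big(u+\e_k\,\zeta(u-s_0)\,\varphi(\cdot/\e_k)\big)$ with a state-dependent cutoff $\zeta$ and a near-optimal $\varphi\in W^{1,\infty}_0((0,j)^N;T_{s_0}(\M))$. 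This works directly for arbitrary $u\in W^{1,p}(\O;\M)$, and the locality lemma is exactly the ingredient your proposal lacks to make a blow-up (rather than density) upper bound work.
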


If the integrand $f$ has a linear growth in the $\xi$-variable, {\it
i.e.}, if $f$ satisfies $(H_2)$ with $p=1$, we assume in addition
that $\M$ is compact, and that
\begin{itemize}
\item[$(H_3)$]there exists $L>0$ such that
$$|f(y,\xi)-f(y,\xi')| \leq L |\xi-\xi'| \quad \text{ for a.e. }y \in \Rb^N \text{ and all }
\xi,\, \xi' \in \Rb^{d \times N}\,.$$
\end{itemize}

Then the following representation result on $W^{1,1}(\O;\M)$ holds:

\begin{theorem}\label{babmilp=1}
Let $\M$ be a connected and compact smooth submanifold of $\Rb^d$
without boundary, and  $f:\Rb^N \times \Rb^{d \times N} \to
[0,+\infty)$~be a Carath\'eodory function satisfying $(H_1)$ to
$(H_3)$ with $p=1$. Then the family $\{\F_\e\}_{\e>0}$
$\G$-converges for the strong $L^1$-topology at every $u \in
W^{1,1}(\O;\M)$ to $\F_{\rm hom} : W^{1,1}(\O;\M) \to [0,+\infty)$,
where
$$\F_{\rm hom}(u):= \int_\O Tf_{\rm hom}(u,\nabla u)\, dx\,,$$
and $Tf_{\rm hom}$ is given by (\ref{Tfhom}).
\end{theorem}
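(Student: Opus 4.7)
The proof follows the standard $\G$-convergence scheme of separately proving a $\G$-liminf and a $\G$-limsup inequality, but the failure of $L^1$-weak compactness in the linear growth regime forces one to combine the constructions underlying Theorem~\ref{babmil} with a Lipschitz truncation argument adapted to the manifold constraint; the Lipschitz hypothesis $(H_3)$ is used systematically to control the resulting errors. A preliminary observation is that $(H_3)$ transfers to $Tf_{\rm hom}$ via formula~(\ref{Tfhom}): $Tf_{\rm hom}(s,\cdot)$ is $L$-Lipschitz on $[T_s(\M)]^N$ uniformly in $s$, and a change of variables combined with the smooth variation of $T_s(\M)$ over the compact manifold $\M$ yields joint continuity of $Tf_{\rm hom}$ in $(s,\xi)$.

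\textbf{$\G$-limsup inequality.} Given $u \in W^{1,1}(\O;\M)$, I would first approximate $u$ in the $W^{1,1}$-norm by a sequence $u_k \in W^{1,\infty}(\O;\M)$, obtained by mollifying $u$ and composing with the nearest-point projection onto $\M$, which is well defined in a tubular neighborhood of the compact manifold. For each Lipschitz $u_k$, the cell-problem construction from the proof of Theorem~\ref{babmil} produces a recovery sequence $u_{k,\e} \to u_k$ in $L^1$ with $\F_\e(u_{k,\e}) \to \int_\O Tf_{\rm hom}(u_k,\nabla u_k)\,dx$. No superlinear growth is needed here: the test fields in~(\ref{Tfhom}) are already $W^{1,\infty}_0$ and $\nabla u_k$ is bounded. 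A diagonal extraction, together with the continuity of $Tf_{\rm hom}$, then provides the recovery sequence for $u$.

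\textbf{$\G$-liminf inequality.} This is the heart of the argument. Let $u_\e \in W^{1,1}(\O;\M)$ with $u_\e \to u$ in $L^1$ and $\sup_\e \F_\e(u_\e) < +\infty$, so $\nabla u_\e$ is bounded in $L^1$ but may concentrate. An Acerbi--Fusco Lipschitz truncation at level $\lambda>0$ gives $v_\e^\lambda \in W^{1,\infty}(\O;\Rb^d)$ coinciding with $u_\e$ outside a set of measure at most $C/\lambda$ and satisfying $\|\nabla v_\e^\lambda\|_\infty \leq C\lambda$; composing with the tubular projection onto $\M$ (after a small correction on the exceptional set so that $v_\e^\lambda$ enters the tubular neighborhood) yields $\tilde u_\e^\lambda \in W^{1,\infty}(\O;\M)$. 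By $(H_3)$,
$$\F_\e(u_\e) \geq \F_\e(\tilde u_\e^\lambda) - L\int_\O |\nabla u_\e - \nabla \tilde u_\e^\lambda|\,dx.$$
Since $\tilde u_\e^\lambda$ has uniformly bounded gradient, $f(x/\e,\nabla \tilde u_\e^\lambda)$ is unchanged if $f$ is replaced by a truncated integrand satisfying $p$-growth for some $p>1$ (the truncation leaves (\ref{Tfhom}) intact in the relevant range), so the $\G$-liminf half of Theorem~\ref{babmil} applies to yield $\liminf_\e \F_\e(\tilde u_\e^\lambda) \geq \int_\O Tf_{\rm hom}(u^\lambda,\nabla u^\lambda)\,dx$ for the weak $W^{1,1}$-limit $u^\lambda$ of $\tilde u_\e^\lambda$. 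Passing to the limit $\lambda\to\infty$, a biting-lemma argument controls the error term, while the continuity of $Tf_{\rm hom}$ and the convergence $u^\lambda\to u$ in $W^{1,1}$ handle the main term.

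\textbf{Main obstacle.} The delicate point is the lower bound: one must perform the Lipschitz truncation in a manifold-compatible way and show that $\int|\nabla u_\e - \nabla \tilde u_\e^\lambda|\,dx$ vanishes as $\lambda\to\infty$, uniformly in $\e$, despite the possible $L^1$-concentration of $\nabla u_\e$. The relative rates of the truncation level $\lambda$ and the homogenization scale $\e$ must be carefully calibrated in a diagonal argument.
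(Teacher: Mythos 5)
Your proof proposal takes a genuinely different route from the paper for both inequalities, and in both cases there is a concrete gap.

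\textbf{Upper bound.} Your argument rests on density of $W^{1,\infty}(\O;\M)$ in $W^{1,1}(\O;\M)$, obtained by mollifying and composing with the nearest-point projection. This fails in general: by the B\'ethuel and B\'ethuel--Zheng density theory, smooth maps are dense in $W^{1,p}(\O;\M)$ with $p<N$ if and only if $\pi_{\lfloor p\rfloor}(\M)=0$, so for $p=1$, $N\geq 2$ and a compact manifold with $\pi_1(\M)\neq 0$ (e.g.\ $\M=\mathbb{S}^1$, which is the running example of the paper) density does not hold, and the mollification $u\ast\rho_\delta$ in general escapes every tubular neighbourhood of $\M$ so that the projection is not even defined. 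The paper avoids this issue entirely: instead of a global density argument, it establishes the locality result of Lemma~\ref{meas} (the localized $\Gamma$-upper-limit is a Radon measure absolutely continuous with respect to $\LL^N$) and then proves the upper bound by a blow-up in Lemma~\ref{upperbound}, constructing the recovery sequence pointwise a.e.\ via a local cut-off near each Lebesgue point; no smooth approximation on the target space is required.

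\textbf{Lower bound.} The Acerbi--Fusco Lipschitz truncation does not yield a vanishing error in the linear growth regime, and the biting-lemma remark does not repair this. The truncated map $\tilde u_\e^\lambda$ agrees with $u_\e$ off a set $E_{\e,\lambda}$ with $\LL^N(E_{\e,\lambda})\to 0$, but when $\nabla u_\e$ concentrates (which it may, as it is bounded only in $L^1$), $\int_{E_{\e,\lambda}}|\nabla u_\e|\,dx$ can stay bounded away from $0$ as $\e\to 0$ for each fixed $\lambda$, so the subtracted error $L\int_\O|\nabla u_\e-\nabla\tilde u_\e^\lambda|\,dx$ does not go to zero in any order of limits; the biting lemma peels off the concentration but you cannot simply discard it because the error appears with a negative sign. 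The paper's Lemma~\ref{lowerbound1} uses a structurally different device, following Fonseca--M\"uller \cite{FM}. After the blow-up one works with $w_k\to w_0=\xi_0\cdot$ in $L^1(Q;\Rb^d)$, and the truncation is on the \emph{amplitude} $|w_k-w_0|$, namely $w^k_{s,t}=w_0+\zeta_{s,t}(|w_k-w_0|)(w_k-w_0)$: on $\{|w_k-w_0|>t\}$ the truncated map is affine with gradient $\xi_0$, so the energy there is controlled by measure alone; the intermediate annulus is controlled by the coarea formula and \cite[Lemma 2.6]{FM}, which selects levels $s_k,t_k$ with a logarithmic gain. This amplitude truncation, not a gradient truncation, is what defuses concentration. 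Moreover the paper does not reduce to the $p>1$ theorem: it extends $f$ to the non-constrained integrand $g$ of Lemma~\ref{defg}, uses the Lipschitz estimate \eqref{moduluscont} deduced from $(H_3)$ to replace $s_0+\rho_k\bar w_k$ by $s_0$ in the $s$-slot, and then applies standard $W^{1,1}$ periodic homogenization for $g(\cdot,s_0,\cdot)$ directly.
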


We would like to emphasize that the use of hypothesis $(H_3)$
is not too restrictive. Indeed,
 the $\G$-limit remains unchanged upon first relaxing the functional
$\F_\e$ (at fixed $\e>0$) in $W^{1,1}(\O;\Rb^d)$. It would
lead to replace the integrand $f$ by its tangential
quasiconvexification which, by virtue of the growth condition
$(H_1)$, does satisfy such a Lipschitz continuity assumption (see \cite{DFMT}).
\vskip5pt

We finally underline that Theorem \ref{babmilp=1} is not completely satisfactory in its present form. Indeed, in the case of an integrand with linear growth, the domain of the
$\G$-limit is obviously larger than the Sobolev space
$W^{1,1}(\O;\M)$ and the analysis has to be performed in the space of functions of bounded variation. In fact Theorem \ref{babmilp=1} is a
first step in this direction and the complete study in $BV$-spaces can be found in \cite{BM}.
\vskip5pt

The paper is organized as follows.
The study of the energy density $Tf_{\rm hom}$
and its main properties are presented in
Section 2. A locality property of the $\Gamma$-limit is established in Section 3. The upper bound inequalities
in  Theorems \ref{babmil} and \ref{babmilp=1} are the object of Section~4.  The lower bounds are obtained
 in Section 5 where  the proofs of both theorems are completed.

\subsection*{Notations}

\noindent We start by introducing some notations. Let $\O$ be a generic
bounded open subset of $\Rb^N$. We denote by  $\A(\O)$ the family of
all open subsets of $\O$.
%and $\mathcal B(\O)$ for the
%$\sigma$-algebra of all Borel subsets of $\O$. We also consider a
%countable subfamily $\R(\O)$ of $\A(\O)$ made of all finite unions
%of cubes with rational
%edge length centered at rational points of $\Rb^N$.
We write  $B^k(s,r)$ for the closed ball in $\Rb^k$ of center $s \in
\Rb^k$ and radius $r>0$,
%The unit sphere in $\Rb^k$ is denoted by
%$\mathbb S^{k-1}:=\{s \in \Rb^k : |s|=1\}$. Given $\nu \in \SN$,
$Q:=(-1/2,1/2)^N$  the  open unit cube in $\Rb^N$, and
%centered at the origin
%with two of its faces orthogonal to $\nu$ and
$Q(x_0,\rho):= x_0 + \rho \,Q$.
%Similarly $Q:=(-1/2,1/2)^N$
%is the unit cube in $\Rb^N$ and $Q(x_0,\rho):= x_0 + \rho \,Q$.
\vskip5pt

The space of real valued Radon measures in $\O$ with finite total
variation is denoted by $\M(\O)$.
%If $\mu \in \M(\O)$
%and $E\in \mathcal B(\O)$, $\mu\res\,E$ stands for the restriction
%of $\mu$ to $E$, {\it i.e.}, $\mu\res \, E(B)=\mu(E\cap B)$ for any
%$B\in \mathcal B(\O)$.
We denote by $\LL^N$ the Lebesgue measure in
$\Rb^N$.
%and by $\HH^{N-1}$ the $(N-1)$-dimensional Hausdorff measure.
If $\mu \in \M(\O)$ and $\lambda \in \M(\O)$ is a nonnegative
Radon measure, we denote by $\frac{d\mu}{d\lambda}$ the
Radon-Nikod\'ym derivative of $\mu$ with respect to $\lambda$. By a
generalization of Besicovitch Differentiation Theorem (see
\cite[Proposition 2.2]{ADM}), there exists a Borel set $E$ such that $\lambda(E)=0$ and
$$\frac{d\mu}{d\lambda}(x)=\lim_{\rho \to 0^+} \frac{\mu(Q(x,\rho))}{\lambda(Q(x,\rho))}$$
for all $x \in {\rm Supp }\, \mu \setminus E$.

\section{Properties of the homogenized energy density}

In this section we present the main properties of the energy density $Tf_\hom$
%and $\vartheta_\hom$
defined in (\ref{Tfhom}).
%and (\ref{thetahom}).
%In particular we will prove that the formula is well
%defined in the sense that the limit exists.
%\subsection{The tangentially homogenized bulk energy}\label{thbe}
 We  consider the bulk energy density
$$Tf_{\rm hom}(s,\xi) := \liminf_{t \to +\infty}\,\inf_{\varphi} \left\{ \med_{(0,t)^N}
f(y,\xi+ \nabla \varphi(y))\, dy : \varphi \in
W^{1,\infty}_0((0,t)^N;T_s(\mathcal{M})) \right\}$$ defined for
$s\in\M$ and $\xi\in [T_s(\M)]^N$. Our first concern is to
show that the $\liminf$ above is actually a limit. To this purpose
we shall introduce a new energy density $\bar f$ for which we
can apply  classical homogenization theories.
\vskip5pt

For $s \in \M$ we denote by $P_s:\Rb^d \to T_s(\M)$ the orthogonal
projection from $\Rb^d$ into $T_s(\M)$, and we set
$$\mathbf{P}_s(\xi):=(P_s(\xi_1),\ldots,P_s(\xi_N)) \quad \text{for
$\xi=(\xi_1,\ldots,\xi_N)\in\Rb^{d\times N}\,$.}$$
Given the Carath\'eodory
integrand $f:\Rb^N \times \Rb^{d\times N} \to [0,+\infty)$
satisfying assumptions $(H_1)$ and $(H_2)$ with $1 \leq p <+\infty$,
we define $\bar f:\Rb^N\times \M\times\Rb^{d\times N}\to
[0,+\infty)$ by
\begin{equation}\label{deffbar}
\bar f(y,s,\xi):=f(y,\mathbf{P}_s(\xi))+|\xi-\mathbf{P}_s(\xi)|^p\,.
\end{equation}
The new integrand $\bar f$ is a Carath\'eodory function,
and $\bar f(\cdot,s,\xi)$ is $1$-periodic for every
$(s,\xi)\in\M\times\Rb^{d\times N}$. By assumption $(H_2)$,
$\bar f$ also satisfies uniform $p$-growth and $p$-coercivity
conditions, {\it i.e.},
\begin{equation}\label{growthfbar}
\alpha'|\xi|^p\leq \bar f(y,s,\xi)\leq
\beta'(1+|\xi|^p)\quad\text{for every
}(s,\xi)\in\M\times\Rb^{d\times N} \text{ and a.e. }y\in \Rb^N\,,
\end{equation}
for some constants $0<\alpha'\leq \beta'<+\infty$.

\begin{proposition}\label{properties1}
Let $f:\Rb^N \times \Rb^{d\times N} \to [0,+\infty)$ be a
Carath\'eodory integrand satisfying $(H_1)$ and $(H_2)$ with $1 \leq
p <+\infty$. Then the following properties hold:
\begin{itemize}
\item[(i)] for every $s \in \M$ and $\xi \in [T_s(\M)]^N$,
$$Tf_{\rm hom}(s,\xi)= \lim_{t \to +\infty} \inf_{\varphi} \left\{
\med_{(0,t)^N} f(y,\xi+ \nabla \varphi(y))\, dy : \varphi \in
W^{1,\infty}_0((0,t)^N;T_s(\mathcal{M})) \right\}\,,$$ and
\begin{equation}\label{identhomform}
Tf_{\rm hom}(s,\xi)=\bar f_{\rm hom}(s,\xi)\,,
\end{equation}
where
$$\bar f_{\rm hom}(s,\xi):=\lim_{t\to+\infty}\inf_{\varphi}\bigg\{
\med_{(0,t)^N} \bar f(y,s,\xi+\nabla \varphi(y))\, dy: \varphi
\in W^{1,\infty}_0((0,t)^N;\Rb^d) \bigg\}$$ is the usual homogenized
energy density of $\bar f$ (see, e.g.,
\cite[Chapter~14]{BD});\\
\item[(ii)] the function $Tf_{\rm hom}$ is tangentially
quasiconvex, {\it i.e.}, for all $s \in \M$ and all $\xi \in
[T_s(\M)]^N$,
$$Tf_{\rm hom}(s,\xi) \leq \int_Q Tf_{\rm hom}(s,\xi + \nabla \varphi(y))\, dy$$
for every $\varphi \in W^{1,\infty}_0(Q;T_s(\M))$. In particular
$Tf_{\rm hom}(s,\cdot)$ is rank one convex;\\
\item[(iii)] there exists
$C>0$ such that
\begin{equation}\label{pgT}
\a|\xi|^p \leq Tf_{\rm hom}(s,\xi) \leq \b(1+|\xi|^p)\,,
\end{equation}
and
\begin{equation}\label{plipT}
|Tf_{\rm hom}(s,\xi)-Tf_{\rm hom}(s,\xi')| \leq
C(1+|\xi|^{p-1}+|\xi'|^{p-1})|\xi-\xi'|
\end{equation}
for every $s \in \M$ and $\xi$, $\xi' \in [T_s(\M)]^N$.
\end{itemize}
\end{proposition}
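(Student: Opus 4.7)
The plan is to reduce every claim to classical periodic homogenization applied to the auxiliary integrand $\bar f$, with the identification in $(i)$ serving as the bridge. The main subtlety lies in $(i)$; once it is secured, $(ii)$ and $(iii)$ follow from standard facts about quasiconvex homogenized densities.

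For $(i)$, fix $s\in\M$ and $\xi\in[T_s(\M)]^N$. The key observation is that for any $\varphi\in W^{1,\infty}_0((0,t)^N;\Rb^d)$, the composition $\psi:=P_s\circ\varphi$ belongs to $W^{1,\infty}_0((0,t)^N;T_s(\M))$, since $P_s$ is a linear projection onto $T_s(\M)$ (and therefore preserves the zero boundary values), and by linearity $\nabla\psi=\mathbf{P}_s(\nabla\varphi)$. Because $\mathbf{P}_s(\xi)=\xi$, we have $\mathbf{P}_s(\xi+\nabla\varphi)=\xi+\nabla\psi$, and the definition (\ref{deffbar}) gives
$$\bar f(y,s,\xi+\nabla\varphi)=f(y,\xi+\nabla\psi)+|\nabla\varphi-\nabla\psi|^p\geq f(y,\xi+\nabla\psi).$$
Conversely, any tangent-valued $\psi$ is itself an admissible competitor for $\bar f_{\rm hom}$, and since $\mathbf{P}_s(\xi+\nabla\psi)=\xi+\nabla\psi$, one has $\bar f(y,s,\xi+\nabla\psi)=f(y,\xi+\nabla\psi)$. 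The two infima at fixed $t$ are therefore equal. As $\bar f$ is Carath\'eodory, $1$-periodic in $y$, and satisfies the uniform $p$-growth/$p$-coercivity bound (\ref{growthfbar}), the classical periodic homogenization theorem (e.g.\ \cite[Chapter~14]{BD}) ensures that the limit as $t\to+\infty$ exists for the $\bar f$-infimum and equals $\bar f_{\rm hom}(s,\xi)$. This simultaneously promotes the $\liminf$ in (\ref{Tfhom}) to a true limit and yields (\ref{identhomform}).

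For $(ii)$, let $\xi\in[T_s(\M)]^N$ and $\varphi\in W^{1,\infty}_0(Q;T_s(\M))$. By standard theory, $\bar f_{\rm hom}(s,\cdot)$ is quasiconvex on $\Rb^{d\times N}$, so
$$\bar f_{\rm hom}(s,\xi)\leq\int_Q\bar f_{\rm hom}(s,\xi+\nabla\varphi(y))\,dy.$$
Since $\xi+\nabla\varphi(y)\in[T_s(\M)]^N$ for a.e.\ $y$, part $(i)$ allows replacing every occurrence of $\bar f_{\rm hom}$ by $Tf_{\rm hom}$, giving the tangential quasiconvexity inequality. Rank one convexity then follows from the usual laminate argument restricted to rank one directions $a\otimes b$ with $a\in T_s(\M)$ and $b\in\Rb^N$, which automatically lie in $[T_s(\M)]^N$.

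For $(iii)$, testing (\ref{Tfhom}) with $\varphi\equiv 0$ and using the $y$-periodicity of $f$ gives $Tf_{\rm hom}(s,\xi)\leq\int_Qf(y,\xi)\,dy\leq\b(1+|\xi|^p)$; the lower bound $\a|\xi|^p\leq Tf_{\rm hom}(s,\xi)$ follows from the coercivity of $f$ and Jensen's inequality applied to $|\cdot|^p$, since the gradient of any admissible test function has vanishing mean. For (\ref{plipT}), the standard fact that a quasiconvex function on $\Rb^{d\times N}$ with upper bound $\leq\beta'(1+|\cdot|^p)$ is automatically locally $p$-Lipschitz with a constant depending only on $\beta'$ and $p$, applied to $\bar f_{\rm hom}(s,\cdot)$ whose growth constants are independent of $s\in\M$, combined with (\ref{identhomform}), delivers the desired estimate on $Tf_{\rm hom}(s,\cdot)$.
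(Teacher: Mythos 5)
Your proof is correct and follows essentially the same route as the paper: identify $Tf_t(s,\xi)$ with $\bar f_t(s,\xi)$ at fixed $t$ by projecting competitors via $P_s$ in one direction and including tangent-valued competitors in the other, then invoke classical periodic homogenization for $\bar f$ to settle (i), with (ii) and (iii) inherited from the standard properties of $\bar f_{\rm hom}(s,\cdot)$.
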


\begin{proof}
Fix $s \in \M$ and $\xi \in [T_s(\M)]^N$. For any $t>0$, we
introduce
$$Tf_{t}(s,\xi) :=\inf_{\varphi} \bigg\{ \med_{(0,t)^N}
f(y,\xi+ \nabla \varphi)\, dy :  \varphi \in
W^{1,\infty}_0((0,t)^N;T_s(\mathcal{M})) \bigg\} \,,$$
and
$$\bar f_{t}(s,\xi):=\inf_{\varphi}\bigg\{ \med_{(0,t)^N} \bar f(y,s,\xi+\nabla \varphi)\, dy:
\varphi \in W^{1,\infty}_0((0,t)^N;\Rb^d) \bigg\}\,.$$
By classical
results (see, {\it e.g.}, \cite[Proposition~14.4]{BD}), there exists
$$\ds\lim_{t\to+\infty} \bar f_t(s,\xi)\quad\text{for every $s\in\M$ and
$\xi\in [T_s(\M)]^N$}\,.$$
Hence to prove \emph{(i)}, it suffices to show
that $Tf_t(s,\xi)=\bar f_t(s,\xi)$ for every $t>0$.  For any
$\varphi \in W^{1,\infty}_0((0,t)^N;T_s(\M))$, we have
$$
\bar f_t(s,\xi)  \leq \med_{(0,t)^N}\bar f(y,s,\xi+\nabla
\varphi)\, dy = \med_{(0,t)^N}  f(y,\xi+\nabla \varphi)\, dy\,,
$$
since $\xi+\nabla \varphi(y)\in [T_s(\M)]^N$ for a.e. $y\in(0,t)^N$.
Taking the infimum over all such $\varphi$'s in the right hand side of
the previous inequality yields $\bar f_t(s,\xi) \leq
Tf_t(s,\xi)$. To prove the converse inequality we pick up $\psi \in
W^{1,\infty}_0((0,t)^N;\Rb^d)$ and we set $\tilde\psi=P_s(\psi)$.
One easily checks that $\tilde\psi \in
W^{1,\infty}_0((0,t)^N;T_s(\M))$ and
$\nabla\tilde\psi=\mathbf{P}_s(\nabla\psi)$ a.e. in $(0,t)^N$.
Therefore
$$Tf_t(s,\xi)  \leq  \med_{(0,t)^N}
f(y,\xi+\nabla \tilde\psi)\, dy = \med_{(0,t)^N}
f\big(y,\mathbf{P}_s(\xi+\nabla \psi)\big)\, dy \leq \med_{(0,t)^N}
\bar f(y,s,\xi+\nabla \psi)\, dy\,.$$ Then the converse
inequality arises taking the infimum over all admissible $\psi$'s.

By standard results $\bar f_\hom(s,\cdot)$ is a quasiconvex
function for every $s\in\M$ (see, {\it e.g.},
\cite[Theorem~14.5]{BD}). As a consequence, for any $s\in \M$,  $\xi
\in [T_s(\M)]^N$ and $\varphi \in W^{1,\infty}_0(Q;T_s(\M))$, we
have
$$Tf_\hom(s,\xi)=\bar f_\hom(s,\xi)  \leq  \int_Q \overline
f_\hom(s,\xi+\nabla \varphi)\, dy =  \int_Q Tf_\hom(s,\xi+\nabla
\varphi)\, dy\,,$$ which proves that $Tf_\hom$ is tangentially
quasiconvex. As a consequence of (\ref{identhomform}) and the
fact that $\bar f_\hom(s,\cdot)$ is rank one convex, it follows
that $Tf_\hom(s,\cdot)$ is rank one convex as well.

The proof of (\ref{pgT}) is immediate in view of $(H_1)$ and
the definition of $Tf_\hom$. Moreover rank one convex functions satisfying uniform $p$-growth and
$p$-coercivity conditions are $p$-Lipschitz (see, {\it e.g.},
\cite[Lemma~2.2,~Chap.~4]{D}), and thus  (\ref{plipT}) holds.
\end{proof}

\begin{remark}
It readily follows from the previous proof that Proposition
\ref{properties1} still holds for any Carath\'eodory integrand $\hat
f:\Rb^N\times\M \times \Rb^{d\times N} \to [0,+\infty)$ instead of
$\bar f$, provided that: $\hat f(x,s,\xi)=f(y,\xi)$ for every
$s\in\M$, every $\xi\in[T_s(\M)]^N$ and a.e. $y\in\Rb^N$; $\hat
f(\cdot,s,\cdot)$ satisfies $(H_1)$ and $(H_2)$ for every $s\in\M$
with uniform estimates with respect to $s$.
 \end{remark}

\begin{remark}\label{rem1d}
If ${\rm dim}(\M)=1$ then $T_s(\M)$ is a one dimensional linear
subspace of $\Rb^d$ for every $s \in \M$. Hence, given $s \in \M$, we can identify
$T_s(\M)$ with $\Rb$ through some linear mapping $i_s:\Rb\to T_s(\M)$. Using the application $i_s$,
we can also identify $[T_s(\M)]^N$ with $\Rb^N$ setting for  $z=(z_1,\ldots,z_N)\in\Rb^N$, $i_s(z):=(i_s(z_1),\ldots,i_s(z_N))$.
Define
$\hat f(y,s,z):=f(y,i_s(z))$ for $(y,s,z)\in\O\times\M\times\Rb^N$. By (\ref{identhomform}) and
\cite[Remark~14.6]{BD}, we can replace  in formula (\ref{Tfhom}) homogeneous boundary conditions
by periodic boundary conditions, and the limit as $t\to+\infty$ by the infimum over all $t\in\Nb$.
Moreover, in the scalar case the homogenization formula can be reduced to a single cell formula (see, {\it e.g.}, \cite[Chapter 14]{BD}).
Therefore
\begin{align*}
Tf_\hom(s,\xi) & =  \inf_{t \in \Nb} \,\inf \left\{\med_{(0,t)^N}
f(y,\xi+\nabla \varphi)\, dy : \varphi \in
W^{1,\infty}_\#((0,t)^N;T_s(\M)) \right\}\\
& = \inf_{t \in \Nb}\, \inf \left\{\med_{(0,t)^N} \hat f(y,s,i_s^{-1}(\xi)+\nabla
\phi)\, dy : \phi \in
W^{1,\infty}_\#((0,t)^N) \right\}\\
& =  \inf \left\{\int_{Q} \hat f(y,s,i_s^{-1}(\xi)+\nabla \phi)\, dy :
\phi \in W^{1,\infty}_\#(Q) \right\}\\
&=\inf\left\{ \int_Q f(y,\xi+ \nabla \varphi)\, dy : \varphi \in
W^{1,\infty}_\#(Q;T_s(\mathcal{M})) \right\}\,.
\end{align*}
This remark states that whenever the manifold $\M$ is one dimensional,
test functions in the minimization problem (\ref{Tfhom}) are in fact
scalar valued, and thus, one can compute the tangentially homogenized
energy density over one single cell instead of an infinite
set of cells. Note that this is not true in general even in the
non constrained case (see, {\it e.g.}, the counter-example in
\cite[Theorem~4.3]{M}).
\end{remark}
\vskip5pt

We conclude this section with an elementary example where the dependence on the $s$-variable is explicit.
It shows that \emph{tangential homogenization} does not reduce in general to standard homogenization. The construction is
based on a rank one laminate for which direct computations can be performed.

\begin{example}\label{exemple}
Assume that $\M=\mathbb{S}^1$ and for $x\in \Rb^N$, $\xi=(\xi_{ij})\in\Rb^{2\times N}$,
$$f(x,\xi)=\sum_{j=1}^N\big(a(x_1)|\xi_{1j}|^2+b(x_1)|\xi_{2j}|^2\big)\,, $$
where $a,b\in L^{\infty}(\Rb)$ are 1-periodic and bounded from below by a positive constant.
Arguing as in Remark~\ref{rem1d} and \cite[Example 25.6]{DM}, one may compute for $s=(s_1,s_2)\in\mathbb{S}^1$ and $\xi\in[T_s(\mathbb{S}^1)]^N$,
$$Tf_{\rm hom}(s,\xi)=\sum_{j=1}^N\alpha_j(s)\big(|\xi_{1j}|^2+|\xi_{2j}|^2\big)\,, $$
with
$$\alpha_j(s)=\begin{cases}
\ds \bigg(\int_{-1/2}^{1/2}\frac{dt}{a(t)s_2^2+b(t)s_1^2}\bigg)^{-1}&\text{ if $j=1$}\,,\\[10pt]
\ds \int_{-1/2}^{1/2}\big(a(t)s_2^2+b(t)s_1^2\big)\,dt & \text{otherwise}\,.
\end{cases}$$
Compare this result with \cite[Example 25.6]{DM}.
\end{example}

To treat the homogenization problem with $p=1$, we will need to extend the function $\bar f$ to the whole space $\Rb^N \times \Rb^d \times \Rb^{d \times
N}$.
% First we recall that the recession function $h^\infty$ of a generic scalar function $h$ defined on
%$\Rb^{d\times N}$
% is given by $$h^\infty(\xi):=\limsup_{t\to+\infty}\,\frac{h(t\xi)}{t}\,.$$
We state in the following lemma our extension procedure.

\begin{lemma}\label{defg}
Assume that $\M$ is compact. Let $f:\Rb^N \times \Rb^{d\times N} \to [0,+\infty)$ be a
Carath\'eodory function satisfying $(H_1)$ to $(H_3)$ with $p=1$. Then there exists a Carath\'eodory function
$g:\Rb^N\times\Rb^d\times\Rb^{d\times N}\to[0,+\infty)$ such that
\begin{equation}\label{idfg}
g(y,s,\xi)=f(y,\xi)
%\quad\text{and}\quad g^\infty(y,s,\xi)=f^\infty(y,\xi)
\quad \text{for $s
\in \M$ and $\xi \in [T_s(\M)]^N\,$,}
\end{equation}
and satisfying :
\begin{itemize}
\item[(i)] $g$ is $1$-periodic in the first variable;
\item[(ii)] there exist $0<\alpha'\leq \b'$ such that
\begin{equation}\label{pgrowth}
\alpha'|\xi|\leq g(y,s,\xi)\leq
\b'(1+|\xi|)
\end{equation}
for every $(s,\xi)\in \Rb^d \times \Rb^{d
\times N}$ and a.e. $y \in \Rb^N$\,;
\item[(iii)] there exist $C>0$ and $C'>0$ such that
\begin{equation}\label{moduluscont}
|g(y,s,\xi)-g(y,s',\xi)| \leq C|s-s'|\; |\xi|\,,\end{equation} and
\begin{equation}\label{lipg}
|g(y,s,\xi) - g(y,s,\xi')| \leq C'|\xi-\xi'|\end{equation} for every
$s$, $s' \in \Rb^d$, every $\xi \in \Rb^{d \times N}$ and a.e. $y
\in \Rb^N$.
%\item[(iv)] if in addition $(H_4)$ holds, there  exists $0<q<1$ and $C''>0$ such that
%\begin{equation}\label{grec}
%|g(y,s,\xi) - g^\infty(y,s,\xi)| \leq C''(1+|\xi|^{1-q}) \quad \text{
%for every $(s,\xi) \in \Rb^d \times \Rb^{d \times N}$  and a.e.
%$y \in \Rb^N\,$.}
%\end{equation}
\end{itemize}
\end{lemma}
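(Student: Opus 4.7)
The plan is to build $g$ by extending the construction of $\bar f$ from \eqref{deffbar}, replacing the base point $s\in\M$ by the nearest-point projection of an arbitrary $s\in\Rb^d$. Since $\M$ is a compact smooth submanifold of $\Rb^d$ without boundary, there is a uniform tubular neighborhood $U_\delta=\{s\in\Rb^d : \dist(s,\M)<\delta\}$ on which the nearest-point projection $\pi:U_\delta\to \M$ is smooth, hence Lipschitz. I would then extend $\pi$ to a globally Lipschitz map $\tilde\pi:\Rb^d\to\M$; for instance, by setting $\tilde\pi(s):=\pi(s)$ on $U_{\delta/2}$ and, using a smooth cutoff $\chi$ supported in $U_\delta$, gluing to a fixed point $s_0\in\M$ outside, composed with the nearest-point projection onto $\M$ (which stays close to $\M$ by compactness). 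A cleaner alternative is Kirszbraun's theorem applied to $\pi|_{\overline{U_{\delta/2}}}$, followed by another projection if one wants values in $\M$; but what really matters is the Lipschitz estimate $|\tilde\pi(s)-\tilde\pi(s')|\leq C|s-s'|$ on $\Rb^d$.

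Next, since $\M$ is smooth, the map $s\mapsto P_s$ from $\M$ into the space of linear endomorphisms of $\Rb^d$ is smooth, and by compactness of $\M$ it is Lipschitz: there exists $K>0$ with $\|P_s-P_{s'}\|\leq K|s-s'|$ for every $s,s'\in\M$. Composing with $\tilde\pi$ yields $\|\mathbf{P}_{\tilde\pi(s)}-\mathbf{P}_{\tilde\pi(s')}\|\leq KC|s-s'|$ for every $s,s'\in\Rb^d$. I would then simply set
$$g(y,s,\xi):=f\bigl(y,\mathbf{P}_{\tilde\pi(s)}(\xi)\bigr)+\bigl|\xi-\mathbf{P}_{\tilde\pi(s)}(\xi)\bigr|\,. $$
By construction, $g$ is Carath\'eodory (composition of the Carath\'eodory $f$ with continuous operations on $(s,\xi)$) and $1$-periodic in $y$ thanks to $(H_1)$, giving (i).

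The identity \eqref{idfg} follows because $\tilde\pi(s)=s$ for $s\in\M$ and $\mathbf{P}_s(\xi)=\xi$ for $\xi\in [T_s(\M)]^N$, so the correction term vanishes and $g(y,s,\xi)=f(y,\xi)$. For (ii), the upper bound in \eqref{pgrowth} comes from $(H_2)$ with $p=1$ together with $|\mathbf{P}_{\tilde\pi(s)}(\xi)|\leq|\xi|$; the lower bound uses $|\xi|\leq|\mathbf{P}_{\tilde\pi(s)}(\xi)|+|\xi-\mathbf{P}_{\tilde\pi(s)}(\xi)|$ combined with the coercivity of $f$, giving $g(y,s,\xi)\geq \min(\alpha,1)|\xi|$. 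For \eqref{lipg} in (iii), use $(H_3)$ and the fact that $\mathbf{P}_{\tilde\pi(s)}$ is a projection of operator norm one, so both terms defining $g$ are Lipschitz in $\xi$ with constants depending only on $L$. Finally, \eqref{moduluscont} follows from
$$\bigl|\mathbf{P}_{\tilde\pi(s)}(\xi)-\mathbf{P}_{\tilde\pi(s')}(\xi)\bigr|\leq KC|s-s'|\,|\xi|$$
together with $(H_3)$ applied to the first term and the triangle inequality for the second.

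The main obstacle I expect is the construction of the global Lipschitz extension $\tilde\pi$ and the verification that $s\mapsto P_s$ is Lipschitz on $\M$; the latter requires a little care because $P_s$ depends on the varying tangent plane, but the compactness and smoothness of $\M$ make this routine via local charts or directly from smoothness of the Gauss map. Once these two Lipschitz facts are in hand, properties (i)--(iii) and \eqref{idfg} are straightforward consequences of $(H_1)$--$(H_3)$ and the fact that $\mathbf{P}_s$ is a linear contraction.
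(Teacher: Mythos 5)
Your construction hinges on the existence of a global Lipschitz map $\tilde\pi:\Rb^d\to\M$ that restricts to the identity on $\M$, and this cannot exist in general. Any continuous map $\Rb^d\to\M$ restricting to the identity on $\M$ would be a retraction, and already for $\M=\mathbb{S}^{d-1}$ such a retraction does not exist (restricting to $\overline{B^d(0,1)}$ would contradict Brouwer). Your specific recipes both break down at this point: the convex combination $\chi(s)\pi(s)+(1-\chi(s))s_0$ may leave the tubular neighborhood entirely (antipodal points on a sphere have midpoint at the center), so the outer nearest-point projection is neither defined nor Lipschitz there; and Kirszbraun only gives you a Lipschitz map into $\Rb^d$, after which there is again no globally defined Lipschitz projection onto $\M$ to compose with. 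The estimates you outline are fine \emph{given} such a $\tilde\pi$, and your verification of \eqref{idfg}, (i), (ii), and (iii) mirrors the paper's computations; the gap is strictly the existence of $\tilde\pi$.

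The paper sidesteps the retraction obstruction with a cutoff rather than an extension: it introduces $\chi\in\C^\infty_c(\Rb^d;[0,1])$ equal to $1$ near $\M$ and vanishing outside the tubular neighborhood, and sets $\mathbb{P}_s(\xi):=\chi(s)\,\mathbf{P}_{\Pi(s)}(\xi)$. This makes sense because when $\chi(s)=0$ the factor $\mathbf{P}_{\Pi(s)}$ need not be defined, and $s\mapsto\chi(s)\mathbf{P}_{\Pi(s)}$ is globally Lipschitz and bounded (product of a compactly supported Lipschitz scalar with a bounded Lipschitz matrix on its support). With this replacement the rest of your argument goes through verbatim; the price is that $\mathbb{P}_s$ is no longer a projection for $s$ far from $\M$, but as you already observed only the contraction bound $\|\mathbb{P}_s\|\le 1$ and the Lipschitz dependence on $s$ are actually used in (ii) and (iii).
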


\begin{proof} For $\delta_0>0$ fixed, let  $\mathcal U:=\big\{s
\in\Rb^d\,:\,\dist(s,\M)<\d_0\big\}$ be the $\d_0$-neighborhood of
$\M$. Choosing $\delta_0>0$ small enough, we may assume that the
nearest point projection $\Pi: \mathcal U \to \M$ is a well defined
Lipschitz mapping. Then the map $s \in \mathcal U \mapsto
P_{\Pi(s)}$ is Lipschitz. Now we introduce a cut-off function $\chi
\in \C^\infty_c(\Rb^d;[0,1])$ such that $\chi(t)=1$ if $\dist(s,\M)
\leq \delta_0/2$, and $\chi(s)=0$ if $\dist(s,\M) \geq 3\delta_0/4$.
We define
$$\mathbb{P}_s(\xi):=\chi(s) \mathbf{P}_{\Pi(s)}(\xi)\quad \text{for $(s,\xi) \in \Rb^d \times \Rb^{d \times N}\,$.}$$
We consider the integrand $g:
\Rb^N\times\Rb^d\times\Rb^{d\times N}\to[0,+\infty)$ given by
\begin{equation*}
g(y,s,\xi)=f(y,\mathbb{P}_s(\xi))+|\xi-\mathbb{P}_s(\xi)|\,.
\end{equation*}
One may check that $g$ is  a Carath\'eodory
function, that $g(\cdot,s,\xi)$ is $1$-periodic for every $(s,\xi)
\in \Rb^d \times \Rb^{d \times N}$,  and that  $(H_2)$ yields \eqref{pgrowth}.
Then \eqref{moduluscont} and \eqref{lipg} follow from $(H_3)$ and the Lipschitz continuity of $s \mapsto \mathbb P_s$.
%Next
%observe that
%$$g^\infty(y,s,\xi)=f^\infty(y,s,\mathbb
%P_s(\xi))+|\xi-\mathbb{P}_s(\xi)|\,.$$
%Hence \eqref{idfg} is immediate while \eqref{grec} is a consequence of $(H_4)$.
\end{proof}

\begin{remark}\label{idfhomghom}
In view of \eqref{idfg}, one may argue exactly as in
the proof of (\ref{identhomform}) to show that
\begin{equation}\label{f=g}
Tf_\hom(s,\xi)=g_\hom(s,\xi)\quad \text{for every $s
\in \M$ and $\xi \in [T_s(\M)]^N\,$,}
\end{equation}
where
$$g_\hom(s,\xi):=\lim_{t \to +\infty}\inf_{\varphi}\left\{\med_{(0,t)^N} g(y,s,\xi+\nabla \varphi(y))\, dy:\, \varphi
\in W^{1,\infty}_0((0,t)^N;\Rb^d) \right\}\,.$$
Hence upon extending
$Tf_\hom$ by $g_\hom$ outside the set $\big\{(s,\xi) \in \Rb^d \times
\Rb^{d \times N}:\; s \in \M,\, \xi \in [T_s(\M)]^N\big\}$, we can
tacitly assume $Tf_\hom$ to be defined over the whole $\Rb^d \times
\Rb^{d \times N}$.
\end{remark}

\section{Localization}

\noindent
%This section is devoted to the proofs of Theorems \ref{babmil} and \ref{babmilp=1}.
In this section we show that a suitable functional larger than the $\G$-limit is a measure.
It will allow us to obtain the upper bound on the $\Gamma$-limit (see Lemma~\ref{upperbound}) through the blow-up method introduced in \cite{FM,FM2}.
%The lower bound will require different proofs in the cases $p>1$ (Lemma \ref{lowerbound}) and $p=1$ (Lemma~\ref{lowerbound1}).
\vskip5pt

Let us consider an arbitrary sequence $\{\e_n\} \searrow 0^+$. Along this sequence we define the
$\G(L^p)$-lower limit $\F:L^p(\O;\Rb^d)\to[0,+\infty]$ by
$$\F(u):= \inf_{\{u_n\}} \left\{ \liminf_{n \to +\infty}\, \F_{\e_n}
(u_n) \,: \,u_n\in W^{1,p}(\O;\M)\,, \,u_n \to u \text{ in }L^p(\O;\Rb^d) \right\}\,.$$
The idea is to localize the functionals  $\{\F_{\e_n}\}_{n\in\Nb}$ on the family $\A(\O)$ of all open
subsets of $\O$. For every $u \in L^p(\O;\Rb^d)$
and every $A \in \A(\O)$, define
$$\F_{\e_n}(u,A):\begin{cases}
\ds \int_A
f\left(\frac{x}{\e_n},\nabla u\right) dx & \text{if }u \in
W^{1,p}(\O;\M)\,,\\[8pt]
+\infty & \text{otherwise}\,.
\end{cases}
$$
Given a compact set $\mathcal{K}\subset \M$ and a subsequence
$\{\e_k\}:= \{\e_{n_k}\} \searrow 0^+$, we introduce for $u \in
W^{1,p}(\O;\M)$ and $A \in \A(\O)$,
\begin{multline*}
\F^{\{\e_{k}\}}_{\mathcal{K}}(u,A) :=  \inf_{\{u_k\}} \Bigg\{ \limsup_{k \to +\infty}\,
\F_{\e_{k}} (u_k,A) :  \, u_k \wto u \text{ weakly in }W^{1,p}(\O;\Rb^d)\,, \\
u_k \to u \text{ uniformly and }
u_k(x)=u(x) \text{ whenever } {\rm dist}\,(u(x),\K)>1\text{ for a.e. } x\in \O\,
\Bigg\}\,.
\end{multline*}
A key point in the upcoming analysis is the following locality result.

\begin{lemma}\label{meas}
For every $u \in W^{1,p}(\O;\M)$, there exists a subsequence $\{\e_{k}\}$ such that  the set function
$\F^{\{\e_{k}\}}_{\K}(u,\cdot)$ is the restriction to $\A(\O)$ of a Radon
measure absolutely continuous with respect to the Lebesgue measure
$\LL^N$.
\end{lemma}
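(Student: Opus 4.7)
The plan is to verify that $\F^{\{\e_k\}}_\K(u,\cdot)$ meets the hypotheses of the De Giorgi--Letta criterion on $\A(\O)$: monotonicity, superadditivity on disjoint opens, subadditivity, and inner regularity. Once these are secured, the set function extends to a Borel measure on $\O$, and absolute continuity with respect to $\LL^N$ follows from the a priori upper bound
\[
\F^{\{\e_k\}}_\K(u,A) \leq \b \int_A (1+|\nabla u|^p)\,dx\,,
\]
obtained by testing with the constant sequence $u_k \equiv u$, which is trivially admissible (it converges uniformly, weakly in $W^{1,p}$, and coincides with $u$ everywhere). Monotonicity is immediate, and inner regularity is a formal consequence of subadditivity applied to an annulus $A\setminus\overline{A'}$ together with this upper bound, which makes the annular contribution vanish as $A'\nearrow A$.

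Superadditivity on disjoint opens is where the subsequence of $\{\e_n\}$ enters. I would fix a countable base $\mathcal R$ of $\A(\O)$ and apply a Cantor diagonal extraction: passing to a subsequence $\{\e_k\}$ of $\{\e_n\}$, any almost-optimal sequence $\{u_k\}$ for $A\cup B$ can be further refined so that both $\F_{\e_k}(u_k,A)$ and $\F_{\e_k}(u_k,B)$ converge along a common subsubsequence for every pair $A,B\in \mathcal R$; the identity $\F_{\e_k}(u_k,A\cup B)=\F_{\e_k}(u_k,A)+\F_{\e_k}(u_k,B)$ then transfers the $\limsup$ into the sum of the two individual $\limsup$'s, giving the desired inequality.

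The principal work is subadditivity, and this is the step I expect to be the main obstacle because of the manifold constraint. For $A'\subset\subset A$ and $B\in \A(\O)$, pick almost-optimal sequences $\{v_k\}$ for $A$ and $\{w_k\}$ for $B$, fix $M\in\Nb$, introduce nested layers $A'\subset\subset C_1\subset\subset\cdots\subset\subset C_M\subset\subset A$ together with smooth cutoffs $\eta_i$ satisfying $|\nabla\eta_i|\leq cM$, and form the interpolant $z_k^i:=\eta_i v_k+(1-\eta_i)w_k$. The difficulty is that this convex combination does \emph{not} take values in $\M$. The way out is to exploit the compactness of $\K$ and the uniform convergence built into the definition of $\F^{\{\e_k\}}_\K$: since $v_k$ and $w_k$ converge uniformly to $u$ and agree with $u$ on $\{\dist(u,\K)>1\}$, for $k$ large all values of $z_k^i$ lie in a fixed tubular neighborhood $\mathcal U$ of the compact set $\K_1:=\K\cup u(\{\dist(u,\K)\leq 1\})\subset \M$ on which the nearest-point projection $\Pi:\mathcal U\to\M$ is well defined and Lipschitz. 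Setting $\tilde z_k^i:=\Pi\circ z_k^i$ yields an admissible $W^{1,p}(\O;\M)$-competitor satisfying $|\nabla\tilde z_k^i|\leq \mathrm{Lip}(\Pi)\,|\nabla z_k^i|$, preserving uniform convergence and the coincidence-off-$\K$ constraint.

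The $p$-growth condition $(H_2)$ then gives
\[
\F_{\e_k}(\tilde z_k^i,A'\cup B) \leq \F_{\e_k}(v_k,A)+\F_{\e_k}(w_k,B)+C\!\int_{C_{i+1}\setminus\overline{C_{i-1}}}\!\big(1+|\nabla v_k|^p+|\nabla w_k|^p+M^p|v_k-w_k|^p\big)\,dx\,.
\]
Summing over $i=1,\ldots,M$ and choosing by pigeonhole the layer of smallest transition energy bounds the last integral by a term of order $1/M$ times the total bulk plus $CM^p\|v_k-w_k\|_\infty^p|\O|$; uniform convergence annihilates the second contribution as $k\to+\infty$ at fixed $M$, and the first vanishes upon letting $M\to+\infty$, yielding subadditivity. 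The whole construction is conducted along the subsequence supplied by the previous diagonal argument, so that a single $\{\e_k\}$ realizes every requirement simultaneously. The delicate point to control throughout is not the cutoff procedure itself, which is standard, but the interaction between the projection $\Pi$ and the energy estimate, which is precisely what the compactness of $\K$ was designed to make manageable.
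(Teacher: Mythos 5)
Your subadditivity mechanism is essentially the right one, and it parallels the paper's: composing the cutoff interpolation $z_k^i=\eta_i v_k+(1-\eta_i)w_k$ with the nearest-point projection $\Pi$ onto $\M$ (well defined on a tubular neighborhood of the compact set $\K':=\{s\in\M:\dist(s,\K)\le 1\}$, which is why the uniform convergence and coincidence-off-$\K$ constraints are built into the definition) is morally the same device as the paper's use of the interpolating map $\Phi$ from Lemma~3.2 of \cite{DFMT}, which does satisfy $\Phi(s_0,s_1,0)=s_0$, $\Phi(s_0,s_1,1)=s_1$ while staying on $\M$. Both yield the same pigeonhole estimate over the $M$ layers. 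So the technical heart is fine.

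The structural plan, however, has a genuine gap at superadditivity, and I do not think it can be repaired along the lines you sketch. The set function $\F^{\{\e_k\}}_\K$ is defined as an infimum of a $\limsup$, and $\limsup$ is \emph{sub}additive: for an almost-optimal $\{u_k\}$ for $A\cup B$ with $A\cap B=\emptyset$ one only gets
\[
\F^{\{\e_k\}}_\K(u,A\cup B)+\eta \;\ge\; \limsup_{k}\big[\F_{\e_k}(u_k,A)+\F_{\e_k}(u_k,B)\big]\,,
\]
and there is no way to split the right-hand side into a sum bounded below by $\F^{\{\e_k\}}_\K(u,A)+\F^{\{\e_k\}}_\K(u,B)$, since $\limsup(a_k+b_k)$ can be strictly smaller than $\limsup a_k+\limsup b_k$. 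The proposed Cantor diagonal does not cure this: extracting a further subsubsequence $\{\e_{k_j}\}$ so that both partial energies converge replaces the set function by $\F^{\{\e_{k_j}\}}_\K$, and since restricting to a subsubsequence can only decrease the infimum, one has $\F^{\{\e_{k_j}\}}_\K(u,A)\le\F^{\{\e_k\}}_\K(u,A)$ --- the wrong direction to transfer the inequality back. Nor can a single refinement work uniformly over all admissible competitors for all pairs in a base; once $\{\e_k\}$ is fixed, new admissible sequences may still have out-of-phase oscillations on $A$ and $B$. In short, the De Giorgi--Letta criterion does not directly apply to this $\limsup$-type set function, and superadditivity on disjoint opens is not available. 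The paper avoids the issue entirely: it proves \emph{only} subadditivity (valid along any $\{\e_k\}$), takes a single diagonal sequence $\{u_k\}$ optimal for $\O$, extracts so that $f(\cdot/\e_k,\nabla u_k)\,\LL^N\res\O$ converges weakly-$*$ to some $\mu\in\M(\O)$, and then uses subadditivity twice together with the lower semicontinuity of weak-$*$ limits to sandwich $\F^{\{\e_k\}}_\K(u,A)=\mu(A)$ for every $A\in\A(\O)$. This direct identification with a measure is the step you are missing, and it is what actually consumes the freedom to pass to a subsequence in the statement of Lemma~\ref{meas}.
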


\begin{proof}
>From the $p$-growth condition $(H_2)$ we infer that for any
subsequence $\{\e_{k}\}$,
\begin{equation}\label{absp}\F^{\{\e_{k}\}}_{\K}(u,A) \leq \b\int_A (1+|\nabla
u|^p)\, dx\,,\end{equation}
so it remains to prove the existence of a suitable subsequence $\{\e_{k}\}$ for which
$\F^{\{\e_{k}\}}_{\K}(u,\cdot)$ is (the trace of) a Radon measure.
\vskip5pt

{\bf Step 1.} We start by proving that for any subsequence $\{\e_{k}\}$ the following subadditivity
property holds:
\begin{equation}\label{sub} \F^{\{\e_{k}\}}_{\K}(u,A) \leq
\F^{\{\e_{k}\}}_{\K}(u,B) + \F^{\{\e_{k}\}}_{\K}(u,A \setminus \overline C)\,
\end{equation} for every $A$, $B$ and $C \in \A(\O)$ such that $\overline C
\subset B \subset A$.
Given $\eta >0$ arbitrary, there
exist sequences $\{u_k\}$, $\{v_k\} \subset W^{1,p}(\O;\M)$
such that $u_k$ and $v_k$ converge weakly to
$u$ in $W^{1,p}(\O;\Rb^d)$, $u_k(x)=v_k(x)=u(x)$ if ${\rm dist}\,(u(x),\K) >1$ for a.e. $x\in\O$,  $u_k$ and $v_k$ are uniformly
converging to $u$, and
\begin{equation}\label{u_n}
\left\{
\begin{array}{l}
\ds \limsup_{k \to +\infty}\, \F_{\e_{k}}(u_k,B) \leq \F^{\{\e_{k}\}}_{\K}(u,B) +
\eta\,,\\[0.3cm]
\ds \limsup_{k \to +\infty}\, \F_{\e_{k}}(v_k,A \setminus \overline C)
\leq \F^{\{\e_{k}\}}_{\K}(u,A \setminus \overline C) + \eta\,.
\end{array}\right.
\end{equation}
Let $\K':=\big\{s\in\M\,:{\rm dist}\,(s,\K)\leq 1\big\}$, then $\K'$ is a compact subset of $\M$ and
$u_k(x)=v_k(x)=u(x)$ if $u(x) \not\in \K'$ for a.e. $x\in\O$.

Consider $L:=\dist(C,\partial B)$, $M \in \Nb$, and for every $i \in
\{0,\ldots,M\}$ define
$$B_i:=\left\{x \in B:\, \dist(x,\partial B)
>\frac{iL}{M}\right\}\,.$$
Given $i \in \{0,\ldots,M-1\}$ let $S_i:=B_i \setminus
\overline{B_{i+1}}$, and $\zeta_i \in \C^\infty_c(\O;[0,1])$ be a
cut-off function satisfying
$$
\zeta_i(x)= \begin{cases}
1 & \text{in }  B_{i+1}\,,\\
0 & \text{in }  \O \setminus B_i\,,
\end{cases}
\text{ and}\quad |\nabla \zeta_i|\leq \frac{2\, M}{L}\,.
$$
By Lemma 3.2 and Remark 3.3 in \cite{DFMT}, there exist $\d>0$,
$c>0$, and a uniformly continuously differentiable mapping $\Phi:
D_\d \times [0,1] \to \M$, where
$$D_\d:=\big\{(s_0,s_1) \in \M \times \M : \dist(s_0,\K') < \d, \;
\dist(s_1,\K') < \d, \; |s_0-s_1|< \d\big\}\,,$$ such that
\begin{equation}\label{Phi}
\Phi(s_0,s_1,0)=s_0\,, \quad \Phi(s_0,s_1,1)=s_1\,,\quad \frac{\partial
\Phi}{\partial t}(s_0,s_1,t) \leq c|s_0-s_1|\,, \end{equation} and
\begin{equation}\label{lip}
|\Phi(s_0,s_1,t)-s_0| \leq c|s_0-s_1|\,.
\end{equation}
Since $\{u_k\}$ and $\{v_k\}$ are uniformly converging to $u$, one
can choose $k$ large enough to ensure that
$$\|u_k - u\|_{L^\infty(\O;\Rb^d)} < \d\,, \quad \|v_k - u\|_{L^\infty(\O;\Rb^d)} < \d \quad \text{ and }\quad \|u_k-
v_k\|_{L^\infty(\O;\Rb^d)} < \d\,.$$ Therefore for a.e.
$x\in\O$, $\dist(u_k(x),\K') < \d$ and $\dist(v_k(x),
\K') < \d$ whenever $u(x) \in \K'\,$.
Now we
are allowed to define
$$w_{k,i}(x):=\begin{cases}
\Phi(v_k(x),u_k(x),\zeta_i(x)) & \text{ if }  u(x) \in \K'\,,\\
u(x) & \text{ if }  u(x) \not\in\K'\,,\end{cases}$$
and $w_{k,i} \in W^{1,p}(\O;\M)$. Using the $p$-growth condition
$(H_2)$ together with (\ref{Phi}), we derive
\begin{multline*}
\int_A f \left(\frac{x}{\e_k},\nabla w_{k,i}\right) dx  \leq
\int_B f \left(\frac{x}{\e_k},\nabla u_k\right) dx +
\int_{A\setminus \overline C} f \left(\frac{x}{\e_k},\nabla
v_k\right)
dx\,+\\
+C_0 \int_{S_i}( 1 + |\nabla u_k|^p + |\nabla v_k|^p + M^p |u_k -
v_k|^p)\, dx\,,
\end{multline*}
for some constant $C_0>0$ independent of $k$, $i$ and $M$. Summing up over $i \in \{0,\ldots,M-1\}$ and dividing by $M$ yields
\begin{multline*}
\frac{1}{M}\sum_{i=0}^{M-1}\int_A f \left(\frac{x}{\e_k},\nabla
w_{k,i}\right) dx  \leq \int_B f \left(\frac{x}{\e_k},\nabla
u_k\right) dx + \int_{A\setminus \overline C} f
\left(\frac{x}{\e_k},\nabla v_k\right)
dx\,+\\
+\frac{C_0}{M} \int_{B \setminus \overline C}( 1 + |\nabla u_k|^p +
|\nabla v_k|^p + M^p |u_k - v_k|^p)\, dx\,.
\end{multline*}
Hence one may find some $i_k \in \{0,\ldots,M-1\}$ such that $\bar w_k:=w_{k,i_k}$ satisifies
\begin{multline}\label{1006}
\int_A f \left(\frac{x}{\e_k},\nabla \bar w_{k}\right) dx  \leq
\int_B f \left(\frac{x}{\e_k},\nabla u_k\right) dx +
\int_{A\setminus \overline C} f \left(\frac{x}{\e_k},\nabla
v_k\right)
dx\,+\\
+\frac{C_0}{M} \int_{B \setminus \overline C}( 1 + |\nabla u_k|^p +
|\nabla v_k|^p + M^p |u_k - v_k|^p)\, dx\,.
\end{multline}
>From (\ref{Phi}) and (\ref{lip}) we deduce that $\bar w_{k} \to u$
uniformly, $\bar w_{k} \wto u$ in $W^{1,p}(\O;\Rb^d)$, and $\bar
w_k(x)=u(x)$ if ${\rm dist}(u(x),\K)>1$ for a.e. $x\in\O$. Taking
$\{\bar w_{k}\}$ as competitor for $\F^{\{\e_k\}}_\K(u,A)$, and
using (\ref{1006}) together with (\ref{u_n}) leads to
\begin{align*}
\F^{\{\e_k\}}_\K(u,A) &\leq  \limsup_{k \to +\infty}\,\F_{\e_k}(\bar w_{k},A)\\
&\leq
\begin{multlined}[t][10cm]
\limsup_{k\to +\infty}\Big\{
\F_{\e_{k}}(u_{k},B) + \F_{\e_{k}}(v_{k},A \setminus \overline C)\,+\\
+ \frac{C_0}{M} \int_{B \setminus \overline C}( 1 + |\nabla u_{k}|^p
+ |\nabla v_{k}|^p + M^p
|u_{k} - v_{k}|^p)\, dx \Big\}
\end{multlined} \\
 &\leq
 \begin{multlined}[t][10cm]
 \F^{\{\e_k\}}_\K(u,B) + \F^{\{\e_k\}}_\K(u,A \setminus \overline C) +
2\eta\,+\\
+ \frac{C_0}{M} \,\sup_{k \in\Nb}\, \int_{B \setminus \overline
C}(1+|\nabla u_k|^p + |\nabla v_k|^p)\, dx\,.
\end{multlined}
\end{align*}
Then property (\ref{sub}) arises sending first $M \to +\infty$,
and then $\eta \to 0$.
\vskip5pt

{\bf Step 2.} Now we complete the proof of Lemma \ref{meas}. Using a
standard diagonal argument, we construct a subsequence
$\{\e_k\} \searrow 0^+$ and a sequence $\{u_k\}\subset
W^{1,p}(\O,\M)$ satisfying
\begin{multline*}
\lim_{k\to+\infty}\F_{\e_k}(u_k,\O)=\inf_{\{v_k\}} \Big\{ \liminf_{k \to +\infty}\,
\F_{\e_{k}} (v_k,\O) :  \, v_k \wto u \text{ weakly in }W^{1,p}(\O;\Rb^d)\,, \\
v_k \to u \text{ uniformly and } v_k(x)=u(x) \text{ whenever } {\rm
dist}\,(u(x),\K)>1\text{ for a.e. } x\in \O\, \Big\}\,.
\end{multline*}
By construction of $\{\e_k\}$ and $\{u_k\}$, we have $\ds  \lim_{k\to+\infty}\F_{\e_k}(u_k,\O)=\F^{\{\e_k\}}_\K(u,\O)$.
Up to the extraction of a further subsequence, we may assume that
$$ f\left(\frac{\cdot}{\e_{k}}, \nabla u_{k}\right)\LL^N\res\, \O\,\mathop{\rightharpoonup}\limits^{\star}\,\mu \quad\text{in }\M(\O)\,,$$
for some nonnegative Radon measure $\mu\in\M(\O)$. By lower
semicontinuity, we have
$$\mu(\O)\leq \lim_{k\to+\infty}\F_{\e_k}(u_k,\O)=\F^{\{\e_k\}}_\K(u,\O)\,. $$
We claim that
\begin{equation*}
\F^{\{\e_k\}}_\K(u,A)=\mu(A)\quad\text{ for any } A\in\A(\O)\,.
\end{equation*}
We fix $A\in\A(\O)$ and we start by proving the inequality ``$\leq$".
Given $\eta>0$ arbitrary we can select, in view of \eqref{absp}, $C\in\A(\O)$, $C\subset\subset A $, such that $\F^{\{\e_k\}}_\K(u,A\setminus\overline C)\leq \eta$.
Then inequality \eqref{sub} implies that for any $B\in\A(\O)$, $C\subset\subset B \subset \subset A$,
$$\F^{\{\e_k\}}_\K(u,A)\leq \eta+\limsup_{k\to+\infty}\,\F_{\e_k}(u_{k},B)\leq \eta+\mu(\overline B)\leq \eta+\mu(A)\,, $$
and the conclusion follows from the arbitrariness of $\eta$.

Conversely, for any $B\in\A(\O)$, $B\subset\subset A$, we have
\begin{multline*}
\mu(\O)\leq \F^{\{\e_k\}}_\K(u,\O)\leq \F^{\{\e_k\}}_\K(u,A)+\F^{\{\e_k\}}_\K(u,\O\setminus \overline B)
\leq \\
\leq \F^{\{\e_k\}}_\K(u,A)+\mu(\O\setminus \overline B)\leq \F^{\{\e_k\}}_\K(u,A)+\mu(\O) -\mu(B)\,.
\end{multline*}
Therefore $\mu(B)\leq \F^{\{\e_k\}}_\K(u,A)$ and the conclusion
follows by inner regularity of $\mu$.
\end{proof}

\section{The upper bound}

\noindent We now make use of the previous locality result to show
the upper bound. This will be done thanks to a blow-up analysis
in the spirit of \cite[Theorem~3.1]{DFMT}.

\begin{lemma}\label{upperbound}
For every $p \in [1,+\infty)$ and $u \in W^{1,p}(\O;\M)$, we have $\F(u)
\leq \F_{\rm hom}(u)\,$.
\end{lemma}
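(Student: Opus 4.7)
The plan is to combine Lemma \ref{meas} with a Fonseca--M\"uller style blow-up argument to bound the Radon--Nikod\'ym density of $\F^{\{\e_{k}\}}_{\K}(u,\cdot)$ by $Tf_\hom(u,\nabla u)$ pointwise. I may assume $\F_\hom(u)<+\infty$, so $u\in W^{1,p}(\O;\M)$ and $Tf_\hom(u,\nabla u)\in L^1(\O)$ by Proposition \ref{properties1} (iii). I fix an arbitrary sequence $\e_n\searrow 0^+$ and a compact $\K\subset\M$ essentially covering the image of $u$ (passing to an exhaustion of $\M$ at the very end if $\M$ is non-compact). Lemma \ref{meas} provides a subsequence $\{\e_k\}$ along which $\mu:=\F^{\{\e_{k}\}}_{\K}(u,\cdot)\ll\LL^N$ is a Radon measure. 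I will prove
\begin{equation*}
\frac{d\mu}{d\LL^N}(x_0)\leq Tf_\hom\bigl(u(x_0),\nabla u(x_0)\bigr)\quad\text{for $\LL^N$-a.e. $x_0\in\O\,$,}
\end{equation*}
which integrates to $\F^{\{\e_{k}\}}_{\K}(u,\O)\leq \F_\hom(u)$; a diagonal Urysohn-type extraction along $\{\e_n\}$ then delivers a recovery sequence yielding $\F(u)\leq \F_\hom(u)$.

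For the pointwise bound, I fix a Lebesgue point $x_0$ of $u$ and $\nabla u$, set $s_0:=u(x_0)$, $\xi_0:=\nabla u(x_0)\in [T_{s_0}(\M)]^N$, and use the Besicovitch derivation theorem to write $d\mu/d\LL^N(x_0)=\lim_{\rho\to 0^+}\mu(Q(x_0,\rho))/\rho^N$. Given $\eta>0$, Proposition \ref{properties1} (i) furnishes $t>0$ and $\varphi\in W^{1,\infty}_0((0,t)^N;T_{s_0}(\M))$ with
\begin{equation*}
\med_{(0,t)^N}f(y,\xi_0+\nabla\varphi(y))\,dy\leq Tf_\hom(s_0,\xi_0)+\eta\,,
\end{equation*}
and I extend $\varphi$ by $t$-periodicity to $\bar\varphi\in W^{1,\infty}(\Rb^N;T_{s_0}(\M))$. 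For small $\rho'<\rho$, I consider the tangential oscillation
\begin{equation*}
v_k(x):=s_0+\xi_0(x-x_0)+\e_k\,\bar\varphi\!\left(\frac{x-x_0}{\e_k}\right),
\end{equation*}
which lies in the affine subspace $s_0+T_{s_0}(\M)$ and hence in a $\d_0$-neighborhood of $\M$ for $\rho$, $\e_k$ small. I set $u_k:=\Pi\circ v_k\in W^{1,\infty}(Q(x_0,\rho');\M)$, where $\Pi$ denotes the nearest-point projection, and glue $u_k$ to $u$ across the transition layer $Q(x_0,\rho)\setminus\overline{Q(x_0,\rho')}$ via the interpolation map $\Phi$ of \cite{DFMT} already employed in the proof of Lemma \ref{meas}, producing an admissible competitor $\tilde u_k$ for $\F^{\{\e_{k}\}}_{\K}(u,Q(x_0,\rho))$.

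Since $d\Pi(s_0)$ is the identity on $T_{s_0}(\M)$, the smoothness of $\Pi$ gives the expansion
\begin{equation*}
\nabla u_k=\xi_0+\nabla\bar\varphi\!\left(\frac{\cdot-x_0}{\e_k}\right)+O\bigl(|v_k-s_0|\,(1+|\nabla\bar\varphi|)\bigr)\quad\text{on }Q(x_0,\rho').
\end{equation*}
Combining this with the continuity and $p$-growth of $f$, classical periodic homogenization for the $t$-periodic integrand $y\mapsto f(y,\xi_0+\nabla\bar\varphi(y))$, and the Lebesgue-point estimate $\|v_k-s_0\|_{L^\infty(Q(x_0,\rho'))}\leq C(\rho+\e_k\|\bar\varphi\|_\infty)$, I obtain
\begin{equation*}
\limsup_{k\to+\infty}\F_{\e_k}(\tilde u_k,Q(x_0,\rho))\leq \rho^N\bigl(Tf_\hom(s_0,\xi_0)+\eta\bigr)+o(\rho^N),
\end{equation*}
the transition-layer contribution being controlled via the $p$-growth of $f$ together with \eqref{Phi}--\eqref{lip} and vanishing as $\rho'\nearrow\rho$. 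Dividing by $\rho^N$, sending $\rho\to 0^+$, then $\eta\to 0^+$, produces the density bound.

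The main obstacle will be controlling the second-order curvature error produced by composing with $\Pi$: the oscillation $\bar\varphi$ lies in the flat tangent space $T_{s_0}(\M)$, but $\M$ departs from it at second order, so $\Pi(v_k)$ differs from $v_k$ by a term whose gradient carries a factor $|v_k-s_0|\cdot|\nabla\bar\varphi|$ that is not a priori small in $\e_k$. It is absorbed by performing the two limits in the correct order: first $\e_k\to 0^+$ with $\rho$ fixed, so that the fast-oscillating contribution averages out via periodic homogenization, and then $\rho\to 0^+$, so that the Lebesgue point property yields $|u(x)-s_0|=o(1)$ uniformly on $Q(x_0,\rho)$. A secondary delicate point is making the competitor $\tilde u_k$ globally admissible for $\F^{\{\e_{k}\}}_{\K}$, which requires choosing $\K$ large enough to contain a neighborhood of $s_0$ and invoking the Lipschitz control \eqref{lip} on $\Phi$ to keep the glueing quantitatively close to $u$.
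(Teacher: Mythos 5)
Your overall blueprint---Lemma \ref{meas}, blow-up, nearest-point projection, and the interpolation map $\Phi$ of \cite{DFMT}---matches the paper's, and your handling of the curvature error (it is $O(\rho)$ uniformly once $\e_k\to0$, then killed by $\rho\to0$ via a modulus of continuity and Riemann--Lebesgue) is the right idea. The gap is in the construction and admissibility of the competitor. You build $u_k=\Pi\circ v_k$ from a perturbation of the \emph{affine blow-up} $s_0+\xi_0(x-x_0)$, then try to glue it to $u$ across a domain annulus $Q(x_0,\rho)\setminus \overline{Q(x_0,\rho')}$ using $\Phi$. But this map is only defined on a set $D_\d$ where its two arguments are pointwise within $\d$ of each other and of a compact set; on your transition layer the quantity $|u(x)-u_k(x)|$ is not pointwise small, since the Lebesgue-point property controls $\int_{Q(x_0,\rho)}|u-s_0|^p$ but not $\|u-s_0\|_{L^\infty(Q(x_0,\rho))}$. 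So the $\Phi$-glueing is not legal, and consequently your $\tilde u_k$ is neither uniformly close to $u$ nor equal to $u$ where ${\rm dist}(u(x),\K)>1$; it is not an admissible test sequence for $\F^{\{\e_k\}}_\K(u,Q(x_0,\rho))$ in the sense of Section~3. Invoking \eqref{lip} does not repair this, because \eqref{lip} bounds $|\Phi(s_0,s_1,t)-s_0|$ by $c|s_0-s_1|$, which is useless precisely when $|s_0-s_1|$ is not small.

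The paper sidesteps this by perturbing $u$ rather than the affine blow-up: it sets $w_k:=u+\e_k\,\zeta(u-s_0)\,\varphi(\cdot/\e_k)$ with a cut-off $\zeta$ living in the \emph{target} (active only where $|u(x)-s_0|<\d/2$) and then $u_k:=\Pi(w_k)$. This makes $\|u_k-u\|_{L^\infty}=O(\e_k)$ automatically, makes $u_k=u$ wherever $u$ is far from $s_0$ (hence from $\K$), and renders $u_k$ admissible with no domain glueing at all. The subsequent estimate then splits $Q(x_0,\rho)$ into the three regions $\{|u-s_0|\geq\d/4\}$, $\{|u-s_0|<\d/4,\,|\nabla u-\xi_0|<\sigma\}$, $\{|u-s_0|<\d/4,\,|\nabla u-\xi_0|\geq\sigma\}$, dispatching the first and third via the Lebesgue-point property and treating the middle one with a modulus-of-continuity argument using the crucial identity $\nabla\Pi(s_0)\nabla\varphi=\nabla\varphi$ (because $\nabla\varphi\in [T_{s_0}(\M)]^N$). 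To salvage your write-up you would need to replace the affine-in-the-domain competitor by this target-cut-off perturbation of $u$ itself, after which your error analysis goes through essentially as you sketched.
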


\begin{proof} {\bf Step 1.} Let $u \in W^{1,p}(\O;\M)$. Given $R>0$ arbitrary large, we set
$\K:= \M\cap B^d(0,R)$, and we consider the subsequence
$\{\e_k\}$ given by Lemma  \ref{meas}.  Obviously $\F(u)\leq \F^{\{\e_k\}}_\K(u,\O)$. We claim that
\begin{equation}\label{upbdR}
\F^{\{\e_k\}}_\K(u,\O)\leq \int_{\O}\bigg\{\chi_R(|u|)\,
Tf_{\rm hom}(u,\nabla u) +\beta\big(1-\chi_R(|u|)\big)\big(1+|\nabla u|^p\big)\bigg\}\,dx\,,
\end{equation}
where $\chi_R(t)=1$ for $t\leq R$ and $\chi_R(t)=0$ otherwise. We
postpone the proof of \eqref{upbdR} to the next step, and we complete
now the proof of Lemma \ref{upperbound}.

Consider a sequence $R_j\to+\infty$ as $j\to+\infty$. Since
$\chi_{R_j}\to 1$ pointwise, we deduce from Fatou's lemma together
with \eqref{pgT} that
\begin{multline*}
\F(u)  \leq
\limsup_{j\to+\infty} \int_{\O}\bigg\{\chi_{R_j}(|u|)\,Tf_{\rm
hom}(u,\nabla u)+ \beta\big(1-\chi_{R_j}(|u|)\big)\big(1+|\nabla
u|^p\big)\bigg\}\,dx \leq\\
\leq  \int_{\O}Tf_{\rm hom}(u,\nabla u)\, dx\,,
\end{multline*}
which is the announced estimate.
\vskip5pt

{\bf Step 2.}
Thanks to Lemma \ref{meas}, to obtain  \eqref{upbdR} it  suffices to prove that
\begin{multline*}
\frac{d\F^{\{\e_k\}}_\K(u,\cdot)}{d\LL^N}(x_0) \leq \chi_R(|u(x_0)|) \,Tf_\hom(u(x_0),\nabla
u(x_0))+\beta\big(1-\chi_R(|u(x_0)|)\big)\big(1+|\nabla u(x_0)|^p\big)
\end{multline*}
for $\LL^N$-a.e. $x_0 \in \O\,$.

Let $x_0 \in \O$ be a
Lebesgue point of $u$ and $\nabla u$ such that $u(x_0) \in \M$,
$\nabla u(x_0) \in [T_{u(x_0)}(\M)]^N$, and the Radon-Nikod\'ym
derivative of $\F^{\{\e_k\}}_\K(u,\cdot)$ with respect to the Lebesgue
measure $\LL^N$ exists. Note that almost every points in $\O$ satisfy
these properties. Now set $s_0:=u(x_0)$ and
$\xi_0:=\nabla u (x_0)$.
\vskip5pt

\noindent{\it Case 1.} Assume that $s_0\not\in\K$. Then, using
$(H_2)$,  we derive that
\begin{multline*}
\frac{d\F^{\{\e_k\}}_\K(u,\cdot)}{d\LL^N}(x_0)=\lim_{\rho\to0^+} \frac{\F^{\{\e_k\}}_\K(u,Q(x_0,\rho))}{\rho^N}\leq \limsup_{\rho\to0^+}\,\limsup_{k\to+\infty}\,\rho^{-N}\F_{\e_k}(u,Q(x_0,\rho))\leq \\
\leq \lim_{\rho\to0^+}\frac{\beta}{\rho^N}\int_{Q(x_0,\rho)}(1+|\nabla u|^p)\,dx=\beta\big(1+|\xi_0|^p\big)\,,
\end{multline*}
which is the desired estimate.
\vskip5pt

\noindent{\it Case 2.} Now we assume that $s_0\in\K$. Fix $0<\eta<1$ arbitrary. By Proposition \ref{properties1}, claim {\it (i)},  there exist $j \in \Nb$ and $\varphi \in
W^{1,\infty}_0((0,j)^N;T_{s_0}(\M))$ such that
\begin{equation}\label{fhomT}
\med_{(0,j)^N} f(y,\xi_0 + \nabla \varphi(y))\, dy \leq
Tf_\hom(s_0,\xi_0) + \eta\,.
\end{equation}
Extend $\varphi$ to $\Rb^N$
by $j$-periodicity, and define $\varphi_k(x):= \xi_0 \, x + \e_k
\varphi(x/\e_k)$.
\vskip5pt
Let $\mathcal{U}$ be an open neighborhood of $\M$ such that the nearest point projection $\Pi:\mathcal{U}\to\M$
defines a $\mathcal{C}^1$-mapping. Fix $\sigma,\delta_0\in (0,1)$ such that $B^d(s_0,2\delta_0)\subset \mathcal{U}$, and consider
 $\delta=\delta(\sigma)\in(0,\delta_0)$ for which
\begin{equation}\label{unifcontPi}
|\nabla \Pi(s) - \nabla \Pi(s')|<\sigma \quad\text{for all $s,s' \in B^d(s_0,\d_0)$ satisfying  $|s-s'|<\delta\,$.}
\end{equation}
Next we introduce a cut-off function $\zeta \in \C^\infty_c(\Rb^d;[0,1])$
satisfying
$$\zeta(x)=\left\{
\begin{array}{ll}
1 & \text{ for } x\in B^d(0,\d/4)\,,\\
&\\
0 & \text{ for } x\not\in B^d(0,\d/2)\,,\\
\end{array}
\right. \quad\text{with}\quad |\nabla \zeta|\leq \frac{C}{\d}\,,$$
and we define
$$w_k(x):=u(x)+\e_k\zeta(u(x)-s_0)\varphi(x/\e_k)\,. $$
Let
$k_0 \in \Nb$ be such that
\begin{equation}\label{k_0}\max\left\{\e_k \|\varphi\|_{L^\infty((0,j)^N;\Rb^d)}
\|\nabla \zeta\|_{L^\infty(\Rb^d;\Rb^d)}, \frac{2\e_k
\|\varphi\|_{L^\infty((0,j)^N;\Rb^d)}}{\delta}\right\} < 1 \quad
\text{ for any }k \geq k_0\,.
\end{equation}
Define for every $k \geq k_0$,
$$u_k(x):=\Pi(w_k(x)) \,.$$
Remark that by (\ref{k_0}), for a.e. $x\in\O\,$
and all $k\geq k_0$, one has $w_k(x) \in B^d(s_0,\d)$ whenever
$|u(x)-s_0| <\d/2$ while $w_k(x)=u(x)$ when $|u(x)-s_0| \geq\d/2$.
Hence $u_k$ is well defined,  $\{u_k\} \subset
W^{1,p}(\O;\M)$, and for a.e. $x\in\O$, $u_k(x)=u(x)$ whenever ${\rm
dist}\,(u(x),\K)>1$. Moreover,
\begin{multline*}
\|u_k - u\|_{L^\infty(\O;\Rb^d)}  =  \|\Pi(w_k) - \Pi(u)\|_{L^\infty(\{|u-s_0|<\d/2\};\Rb^d)}
 \leq\\
 \leq \e_k \, \|\nabla
\Pi\|_{L^\infty(B^d(s_0,\d_0);\Rb^d)}\|\varphi\|_{L^\infty((0,j)^N;\Rb^d)}
\to 0
\end{multline*}
as $k \to +\infty$. Now
the Chain Rule formula yields
\begin{multline*}
\nabla u_k(x)=\nabla \Pi(w_k(x)) \Big( \nabla u(x) + \e_k
\big(\varphi(x/\e_k) \otimes \nabla \zeta (u(x)-s_0)\big) \nabla
u(x)\, +\\
+ \zeta (u(x)-s_0) \nabla \varphi(x/\e_k) \Big)\,,
\end{multline*}
and consequently
\begin{multline*}
|\nabla u_k(x)|  \leq  \|\nabla \Pi\|_{L^\infty(B^d(s_0,\d_0);\Rb^d)}
\Big( \big(1+ \e_k \|\varphi\|_{L^\infty((0,j)^N;\Rb^d)} \|\nabla
\zeta\|_{L^\infty(\Rb^d;\Rb^d)} \big) |\nabla u(x)|
\,+\\
+ \|\nabla \varphi\|_{L^\infty((0,j)^N;\Rb^{d \times
N})} \Big)\,.
\end{multline*}
By (\ref{k_0}) it follows that for any $k \geq k_0$,
\begin{eqnarray}\label{gradu_n}|\nabla u_k(x)| & \leq & C_0
(|\nabla u(x) - \xi_0| +1)
\end{eqnarray}
for some constant $C_0=C_0(s_0,\xi_0,\delta_0,\eta)>0$ independent
of $x$ and $k$. Hence the sequence $\{u_k\}$ is uniformly bounded in
$W^{1,p}(\O;\Rb^d)$ so that $u_k\rightharpoonup u$ in
$W^{1,p}(\O;\Rb^d)$.

Then we observe that $|\nabla u_k|\leq 2C_0$ a.e. in $\{|\nabla u -
\xi_0| < \sigma\}$ while $$\|\nabla \varphi_k\|_{L^\infty(\O;\Rb^{d
\times N})}\leq |\xi_0|+ \|\nabla \varphi\|_{L^\infty((0,j)^N;\Rb^{d
\times N})}\,.$$
Set
\begin{equation}\label{defM}
M:=\max\big\{2C_0\,,|\xi_0|+\|\nabla
\varphi\|_{L^\infty((0,j)^N;\Rb^{d \times N})}\big\}\,,
\end{equation}
(which only depends on $s_0$, $\xi_0$, $\delta_0$ and $\eta$)
so that
\begin{equation}\label{bound}
|\nabla u_k| \leq M \quad \text{and}\quad |\nabla \varphi_k|
\leq M \quad \text{ a.e. in $\{|\nabla u -
\xi_0| < \sigma\}$\,.}
\end{equation}
Next for a.e. $x \in \{|u-s_0| < \d/4\}\cap\{|\nabla u -
\xi_0|< \sigma\}$, we have $\zeta(u(x)-s_0)=1$ and
\begin{align*}
|\nabla u_k(x) - \nabla \varphi_k(x)| & \leq |\nabla
\Pi(w_k)\nabla u(x) - \xi_0| + |\nabla \Pi(w_k) \nabla
\varphi(x/\e_k) - \nabla
\varphi(x/\e_k)|\\
& \leq  \begin{multlined}[t][10cm]
\!  |\nabla \Pi(w_k) - \nabla \Pi(s_0)| \, |\nabla u(x)| +
|\nabla \Pi(s_0)|\, |\nabla u(x)-\xi_0|+\\
+|\nabla \Pi(w_k) - \nabla \Pi(s_0)|\, \|\nabla
\varphi\|_{L^\infty((0,j)^N;\Rb^{d \times N})}\,,
\end{multlined}
\end{align*}
where, in the last inequality, we have used the fact that
 $\nabla \Pi(s_0)\nabla
\varphi(y)=\nabla \varphi(y)$ since $\nabla
\varphi(y) \in [T_{s_0}(\M)]^N$  for a.e. $y \in \Rb^N$.  Using  (\ref{unifcontPi}) and the fact that
$|w_k-s_0|<\d$ a.e. in $\{|u-s_0| < \d/4\}\cap\{|\nabla u -
\xi_0|< \sigma\}$, we deduce
\begin{eqnarray}\label{diff}
|\nabla u_k(x) - \nabla \varphi_k(x)| \leq \big( |\nabla u(x)|  +
|\nabla \Pi(s_0)|+\|\nabla \varphi\|_{L^\infty((0,j)^N;\Rb^{d \times
N})} \big) \sigma \leq C_1 \sigma
\end{eqnarray}
for a.e. $x \in \{|u-s_0| < \d/4\}\cap\{|\nabla u - \xi_0|<
\sigma\}$, where $C_1=C_1(s_0,\xi_0,\delta_0,\eta)>0$ is a constant
independent of $\sigma$, $k$ and $x$.

Now we estimate
\begin{align}\label{I}
\nonumber \frac{d\F^{\{\e_k\}}_\K(u,\cdot)}{d\LL^N}\,&(x_0)  =  \lim_{\rho \to 0^+}
\frac{\F^{\{\e_k\}}_\K(u,Q(x_0,\rho))}{\rho^N}\nonumber\\
\nonumber \leq &\,\limsup_{\rho \to 0^+}\limsup_{k\to +\infty}\,
\frac{1}{\rho^N}
\int_{Q(x_0,\rho)}f\left(\frac{x}{\e_k},\nabla u_k\right) dx\\
\nonumber \leq &
\, \limsup_{\rho \to 0^+}\limsup_{k \to +\infty}
\frac{1}{\rho^N} \int_{Q(x_0,\rho) \cap
\{|u-s_0|\geq \d/4\}}f\left(\frac{x}{\e_k},\nabla u_k\right) dx\,\\
\nonumber&\,+ \limsup_{\rho \to 0^+}\limsup_{k \to +\infty}\,
\frac{1}{\rho^N}\int_{Q(x_0,\rho) \cap \{|u-s_0|<\d/4\} \cap \{|\nabla u -
\xi_0| < \sigma\}}  f\left(\frac{x}{\e_k},\nabla u_k\right) dx\,\\
\nonumber&\,+ \limsup_{\rho \to 0^+}\limsup_{k \to +\infty}\,
\frac{1}{\rho^N} \int_{Q(x_0,\rho) \cap \{|u-s_0|<\d/4\} \cap \{|\nabla u -
\xi_0| \geq \sigma\}} f\left(\frac{x}{\e_k},\nabla u_k\right) dx\\
 =:&\; I_1 + I_2 + I_3\,.
\end{align}
Thanks to (\ref{gradu_n}), the $p$-growth condition $(H_2)$ and our
choice of $x_0$, we have
\begin{align}\label{I1}
I_1 & \leq  C \limsup_{\rho \to 0^+} \frac{1}{\rho^N}
\int_{Q(x_0,\rho) \cap
\{|u-s_0|\geq \d/4\}}(1+|\nabla u(x) -\xi_0|^p)\, dx\nonumber\\
 &
 \begin{multlined}[10cm]
 \leq  C \limsup_{\rho \to 0^+} \med_{Q(x_0,\rho)} |\nabla u(x) -\xi_0|^p\, dx
+  \frac{4C}{\d}\limsup_{\rho \to 0^+} \med_{Q(x_0,\rho)}
|u(x)-s_0|\, dx=0\,,
\end{multlined}
\end{align}
while
\begin{align}\label{I3}
\nonumber I_3 & \leq C \limsup_{\rho \to 0^+} \frac{1}{\rho^N}
\int_{Q(x_0,\rho) \cap \{|u-s_0|<\d/4\} \cap \{|\nabla u -
\xi_0| \geq \sigma\}}(1+|\nabla u(x) -\xi_0|^p)\, dx\\
&
\begin{multlined}[10cm]
 \leq  C \limsup_{\rho \to 0^+} \med_{Q(x_0,\rho)} |\nabla u(x)
-\xi_0|^p\, dx
+ \frac{C}{\sigma}\limsup_{\rho \to 0^+}
\med_{Q(x_0,\rho)} |\nabla u(x)-\xi_0|\, dx=0\,.
\end{multlined}
\end{align}

Let us now treat the integral $I_2$. Since, for a.e. $y \in \Rb^N$,
the function $f(y,\cdot)$ is continuous, it is uniformly continuous
on $B^{d \times N}(0,M)$ where $M>0$ is given in (\ref{defM}).
Define the modulus of continuity of $f(y,\cdot)$ over $B^{d \times
N}(0,M)$ by
$$\o(y,t):= \sup \{ |f(y,\xi) - f(y,\xi')| : \; \xi,\, \xi' \in
B^{d \times N}(0,M) \text{ and }|\xi-\xi'| \leq t\}\,.$$
It turns out
that $\o(y,\cdot)$ is increasing, continuous and $\o(y,0)=0$, while
$\o(\cdot,t)$ is measurable (since the supremum can be restricted to all admissible $\xi$ and $\xi'$ having rational
entries) and $1$-periodic. Thanks to (\ref{bound}) and
(\ref{diff}) we get that
$$\left|f\left(\frac{x}{\e_k},\nabla u_k(x) \right) - f\left(\frac{x}{\e_k},\nabla \varphi_k(x)
\right)\right| \leq \o\left(\frac{x}{\e_k},C_1\sigma\right)$$ for
a.e. $x \in Q(x_0,\rho)\cap\{|u-s_0| < \d/4\}\cap\{|\nabla u- \xi_0|
< \sigma\}$.

Integrating over the set $Q(x_0,\rho) \cap \{|u-s_0|<\d/4\} \cap
\{|\nabla u - \xi_0| < \sigma\}$, and taking the limit as $k\to
+\infty$, we obtain in view of the Riemann-Lebesgue Lemma that
\begin{multline*}
\!\!\limsup_{k \to +\infty}\rho^{-N}\int_{Q(x_0,\rho) \cap
\{|u-s_0|<\d/4\} \cap \{|\nabla u - \xi_0| <
\sigma\}}\left|f\left(\frac{x}{\e_k},\nabla u_k(x) \right) -
f\left(\frac{x}{\e_k},\nabla \varphi_k(x) \right)\right|dx\leq \\
\leq  \limsup_{k \to +\infty}\rho^{-N} \int_{Q(x_0,\rho)}
\o\left(\frac{x}{\e_k},C_1\sigma\right)dx =  \int_Q
\o(y,C_1\sigma)\, dy\,,
\end{multline*}
where we have used the fact that $y \mapsto \o(y,C_1\sigma)$ is a
measurable $1$-periodic function. Observe that the Dominated
Convergence Theorem together with $ \o(y,0)=0$ implies
\begin{equation}\label{vanom}
\lim_{\sigma \to 0^+}\int_Q \o(y,C_1\sigma)\, dy=0\,.
\end{equation}
We have obtained
\begin{equation}\label{I2}
I_2  \leq  \limsup_{\rho \to 0^+} \limsup_{k \to +\infty}
\frac{1}{\rho^N} \int_{Q(x_0,\rho)} f\left(\frac{x}{\e_k},\nabla
\varphi_k \right) dx+\int_Q \o(y,C_1\sigma)\, dy\,.
\end{equation}
Using the definition of $\varphi_k$ and the Riemann-Lebesgue Lemma,
we infer from (\ref{fhomT}) that
\begin{multline}\label{I2bis}
 \limsup_{\rho \to 0^+} \limsup_{k \to +\infty}
\frac{1}{\rho^N} \int_{Q(x_0,\rho)} f\left(\frac{x}{\e_k}, \xi_0 +
\nabla
\varphi\left(\frac{x}{\e_k}\right) \right) dx=\\
 =  \med_{(0,j)^N} f(y,\xi_0 + \nabla \varphi(y))\, dy
 \leq  Tf_\hom(s_0,\xi_0) + \eta\,.
\end{multline}
Hence gathering (\ref{I}), (\ref{I1}), (\ref{I3}), (\ref{I2}) and \eqref{I2bis} we
deduce that
$$\frac{d\F^{\{\e_k\}}_\K(u,\cdot)}{d\LL^N}(x_0) \leq  Tf_\hom(s_0,\xi_0) +\int_Q \o(y,C_1\sigma)\, dy+
\eta\,.$$
Thanks to \eqref{vanom}, the thesis follows sending first $\sigma\to0$, and then $\eta\to0$.
\end{proof}

\section{The lower bound}

\noindent We now investigate the $\G$-$\liminf$ inequality still
through the blow-up method. In contrast with Lemma \ref{upperbound}
we will distinguish energies with superlinear growth and energies
with linear growth. We will conclude this section with the proofs of
Theorems \ref{babmil} and \ref{babmilp=1}.

\subsection{The case of superlinear growth}

\noindent The case $p>1$ is based on an equi-integrability
result known as Decomposition Lemma \cite[Lemma~1.2]{FMP}, which
allows to consider sequences with $p$-equi-integrable
gradients. It enables to use properties valid up to
sets where the energy remains  small.

\begin{lemma}\label{lowerbound}
Assume $p \in (1,+\infty)$.  Then  $\F(u) \geq \F_{\rm hom}(u)$ for every $u \in W^{1,p}(\O;\M)\,$.
\end{lemma}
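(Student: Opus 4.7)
The plan is to combine a blow-up analysis \cite{FM, FM2} with the equi-integrability provided by the Decomposition Lemma \cite[Lemma~1.2]{FMP}, adapting the tangential construction of \cite[Theorem~3.1]{DFMT} to the homogenization setting.

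\emph{Setup and equi-integrability.} Assume $\F(u)<+\infty$, otherwise there is nothing to prove. Pick $u_n \in W^{1,p}(\O;\M)$ with $u_n \to u$ in $L^p(\O;\Rb^d)$ and $\lim_n \F_{\e_n}(u_n)=\F(u)$. The coercivity in $(H_2)$ implies $u_n \wto u$ in $W^{1,p}(\O;\Rb^d)$, and, passing to a subsequence, $f(\cdot/\e_n,\nabla u_n)\LL^N \res \O \,\mathop{\rightharpoonup}\limits^{\star}\, \mu$ in $\M(\O)$ with $\mu(\O)\leq\F(u)$. It thus suffices to prove
$$\frac{d\mu}{d\LL^N}(x_0) \geq Tf_{\rm hom}\big(u(x_0),\nabla u(x_0)\big) \quad \text{for $\LL^N$-a.e. } x_0\in\O.$$
Applying the Decomposition Lemma to $\{u_n\}$ viewed in $W^{1,p}(\O;\Rb^d)$ furnishes $\{v_n\}$ with $v_n\wto u$ in $W^{1,p}(\O;\Rb^d)$, $\{|\nabla v_n|^p\}$ equi-integrable, and $|\{u_n\neq v_n\}|\to 0$. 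Since $u(x)\in\M$ a.e., the $v_n$ eventually take values in a fixed tubular neighborhood of $\M$ on sets of full measure; composing with the nearest point projection $\Pi$ restores the manifold constraint while preserving equi-integrability and the limit measure $\mu$ (the discrepancy on the vanishing set $\{u_n\neq v_n\}$ is absorbed thanks to $(H_2)$ and equi-integrability).

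\emph{Blow-up and tangential projection.} Fix a Lebesgue point $x_0$ of $u$ and $\nabla u$ at which $s_0:=u(x_0)\in\M$, $\xi_0:=\nabla u(x_0)\in [T_{s_0}(\M)]^N$, and $\frac{d\mu}{d\LL^N}(x_0)$ exists. For $\LL^1$-a.e. $\rho$, $\mu$ does not charge $\partial Q(x_0,\rho)$, so $\mu(Q(x_0,\rho))\geq \limsup_n \int_{Q(x_0,\rho)}f(x/\e_n,\nabla v_n)\,dx$. A standard diagonal extraction yields $\rho_n\to 0$ and integers $t_n:=\rho_n/\e_n\to +\infty$ (using the $\Zb^N$-periodicity of $f$ to translate $x_0$ to a lattice point) such that, after the change of variables $y=(x-x_0)/\e_n$,
$$\frac{d\mu}{d\LL^N}(x_0) \geq \liminf_n \med_{(0,t_n)^N} f\big(y,\,\xi_0+\nabla\psi_n(y)\big)\,dy, \qquad \psi_n(y) := \frac{v_n(x_0+\e_n y)-s_0-\e_n\xi_0 y}{\e_n}.$$
The map $\psi_n$ is $\Rb^d$-valued; projecting onto the tangent space, $\tilde\psi_n:=P_{s_0}\psi_n$ takes values in $T_{s_0}(\M)$. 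Since $v_n\in\M$ a.e., one has $\nabla v_n\in [T_{v_n}(\M)]^N$, and the Lipschitz dependence of $s\mapsto P_s$ together with the fact that $v_n\to s_0$ in the blown-up cubes (Lebesgue point) gives $\nabla v_n-\mathbf{P}_{s_0}(\nabla v_n)\to 0$ in averaged $L^p$-sense. The $p$-Lipschitz estimate on $f$ derived from $(H_2)$ and the equi-integrability of $|\nabla v_n|^p$ then allow $\psi_n$ to be replaced by $\tilde\psi_n$ with vanishing error.

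\emph{Boundary cutoff and conclusion.} A De Giorgi slicing on nested sub-cubes of $(0,t_n)^N$ converts $\tilde\psi_n$ into an admissible competitor $\varphi_n\in W^{1,\infty}_0((0,t_n)^N;T_{s_0}(\M))$ (after a smooth truncation-regularization valued in the linear space $T_{s_0}(\M)$) that agrees with $\tilde\psi_n$ on a slightly smaller cube; the energy contribution on the annular slice is controlled by the $p$-equi-integrability of $|\nabla v_n|^p$ times a factor going to $0$, hence is negligible. By definition of $Tf_{t_n}(s_0,\xi_0)$ and Proposition \ref{properties1}\emph{(i)},
$$\frac{d\mu}{d\LL^N}(x_0) \geq \liminf_n Tf_{t_n}(s_0,\xi_0) = Tf_{\rm hom}(s_0,\xi_0).$$
The main obstacle is the tangential projection step: one must quantify that replacing $\nabla v_n$ by its projection at $s_0$ costs negligible energy, and this requires simultaneously using the Lebesgue-point proximity $v_n\approx s_0$ and the equi-integrability of $|\nabla v_n|^p$; without the latter the $p$-Lipschitz estimate on $f$ would not be uniformly integrable along the sequence, and the blow-up would fail.
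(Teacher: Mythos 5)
Your overall skeleton (blow-up + equi-integrability from the Decomposition Lemma + standard homogenization) matches the paper's, but you take a genuinely different route at the crucial step: you project the blown-up test maps onto the linear space $T_{s_0}(\M)$ and try to feed the projection directly into the definition \eqref{Tfhom} of $Tf_t(s_0,\xi_0)$. The paper instead never constructs tangentially valued competitors; it replaces $f$ by the penalized integrand $\bar f(y,s,\xi)=f(y,\mathbf{P}_s(\xi))+|\xi-\mathbf{P}_s(\xi)|^p$ from \eqref{deffbar}, which agrees with $f$ on tangential directions, is coercive on all of $\Rb^{d\times N}$, and only requires \emph{$s$-continuity} after blow-up. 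Freezing $s=s_0$ and appealing to unconstrained homogenization of $\bar f(\cdot,s_0,\cdot)$, combined with $\bar f_{\rm hom}(s_0,\cdot)=Tf_{\rm hom}(s_0,\cdot)$ from Proposition~\ref{properties1}, yields the lower bound without ever manufacturing $T_{s_0}(\M)$-valued test functions.

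There is a genuine gap in your argument, located precisely where you diverge from the paper. You invoke a ``$p$-Lipschitz estimate on $f$ derived from $(H_2)$'' to replace $\nabla\psi_n$ by $\mathbf{P}_{s_0}(\nabla\psi_n)$ inside $f$ at small cost. No such estimate exists: $(H_2)$ gives only two-sided $p$-growth bounds, and a Carath\'eodory integrand satisfying $(H_2)$ need not be locally Lipschitz (or even uniformly continuous) in $\xi$. The local Lipschitz bound \eqref{plipT} holds for the \emph{rank-one convex} envelope $Tf_{\rm hom}$, not for $f$ itself. To control $|f(y,\xi_0+\nabla\psi_n)-f(y,\xi_0+\nabla\tilde\psi_n)|$ you would need to restrict to bounded gradients, invoke Scorza--Dragoni to obtain a uniform modulus of $\xi$-continuity on a good compact set $K_\eta\times B^{d\times N}(0,M)$, and then discard the bad sets using equi-integrability — essentially duplicating Step~3 of the paper's proof but for the $\xi$-variable rather than the $s$-variable. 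This is fixable, but it is not what you wrote, and as stated the step fails. A secondary issue: your ``composing with $\Pi$ restores the manifold constraint'' is not well defined on the vanishing set $\{u_n\neq v_n\}$, where the Decomposition-Lemma outputs need not lie in the tubular neighborhood of $\M$; fortunately this patch is not actually needed, since you only use $\nabla v_n\in[T_{v_n}(\M)]^N$ on $\{u_n=v_n\}$, where it holds automatically because $\nabla v_n=\nabla u_n$ there.
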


\begin{proof}
Let $u \in W^{1,p}(\O;\M)$. By a standard diagonal argument, we first obtain a subsequence $\{\e_n\}$ (not relabeled) and
$\{u_n\} \subset W^{1,p}(\O;\M)$ such that $u_n\to  u$ in $L^p(\O;\Rb^d)$ and
$$\F(u)= \lim_{n \to
+\infty}\int_\O f \left(\frac{x}{\e_n},\nabla u_n \right)dx<+\infty\,.$$
Define the sequence of nonnegative Radon measures
$$\mu_n:=f \left(\frac{\cdot}{\e_n},\nabla u_n \right)\LL^N
\res\,  \O\,.$$ Extracting a  further subsequence if necessary,
we may assume that there exists a nonnegative Radon measure $\mu \in
\M(\O)$ such that $\mu_n \xrightharpoonup[]{*} \mu$ in $\M(\O)$.
Using Lebesgue Differentiation Theorem one can split $\mu$ into
the sum of two mutually disjoint nonnegative measures $\mu=\mu^a +
\mu^s$ where $\mu^a \ll \mathcal L^N$ and $\mu^s$ is singular with
respect to $\LL^N$. Since $\mu^a(\O)\leq\mu(\O) \leq \F(u)$, it is
enough to check that
$$\frac{d\mu}{d\LL^N}(x_0) \geq Tf_\hom(u(x_0),\nabla u(x_0))\quad
\text{ for }\LL^N\text{-a.e. }x_0 \in \O\,.$$
\vskip5pt

{\bf Step 1.} Select a point $x_0 \in
\O$ which is a Lebesgue point of $u$ and $\nabla u$,  a
point of approximate differentiability of $u$ (so that $u(x_0) \in
\M$, $\nabla u(x_0) \in [T_{u(x_0)}(\M)]^N$), and such that the
Radon-Nikod\'ym derivative of $\mu$ with respect to the Lebesgue
measure $\LL^N$ exists and is finite. Note that almost every points
$x_0$ in $\O$ satisfy these properties. As in the proof of Lemma
\ref{upperbound}, set $s_0:=u(x_0)$ and $\xi_0:=\nabla u(x_0)$.

Let $\{\rho_k\} \searrow 0^+$ be such that $\mu(\partial
Q(x_0,\rho_k))=0$ for every $k \in \Nb$. Using the integrand
$\bar f$ defined in \eqref{deffbar} one obtains
\begin{align*}
\nonumber\frac{d\mu}{d\LL^N}(x_0) &= \lim_{k \to +\infty}
\frac{\mu(Q(x_0,\rho_k))}{\rho_k^N}\\
\nonumber&=\lim_{k \to +\infty}
\lim_{n \to +\infty}\frac{\mu_n(Q(x_0,\rho_k))}{\rho_k^N}\\
\nonumber&= \lim_{k \to +\infty} \lim_{n \to +\infty} \int_Q
f\left(\frac{x_0 +\rho_k y}{\e_n},\nabla
u_n(x_0 + \rho_k y)\right)dy\\
\nonumber&= \lim_{k \to +\infty} \lim_{n \to +\infty} \int_Q
\bar f\left(\frac{x_0 +\rho_k y}{\e_n},u_n(x_0+\rho_k y),\nabla
u_n(x_0 + \rho_k y)\right)dy\\
&=\lim_{k \to +\infty} \lim_{n \to +\infty} \int_Q
\bar f\left(\frac{x_0 +\rho_k y}{\e_n},s_0+\rho_kv_{n,k}(y),\nabla v_{n,k}(y)\right)dy\,,
\end{align*}
where we have set $v_{n,k}(y):=\big[u_n(x_0 + \rho_k y) - s_0
\big]/\rho_k$. Note that since $x_0$ is a point of approximate
differentiability of $u$ and $u_n \to u$ in $L^p(\O;\Rb^d)$, we have
\begin{multline*}
\lim_{k \to +\infty} \lim_{n \to +\infty} \int_Q
|v_{n,k}(y) - \xi_0\, y|^p\, dy =\lim_{k \to +\infty}
\int_{Q(x_0,\rho_k)} \frac{|u(y) - s_0 - \xi_0 \,
(y-x_0)|^p}{\rho_k^{N+p}}\, dy= 0\,.
\end{multline*}
Hence
one can find a
diagonal sequence $\e_k:=\e_{n_k} < \rho_k^2$ such that, setting
$v_k(y):=v_{n_k,k}(y)$ and $v_0(y):=\xi_0\, y$,  $v_k \to v_0$
in $L^p(Q;\Rb^d)$ and
\begin{equation}\label{inerst}
\frac{d\mu}{d\LL^N}(x_0) = \lim_{k \to +\infty} \int_Q
\bar f\left(\frac{x_0 +\rho_k y}{\e_{k}}, s_0+\rho_k
v_k(y),\nabla v_k(y)\right)dy\,.\end{equation}
Next observe that $\{\nabla v_k\}$ is bounded in $L^p(Q;\Rb^{d \times
N})$ thanks to the coercivity condition \eqref{growthfbar}.
By the Decomposition Lemma \cite[Lemma~1.2]{FMP} we
now find a sequence $\{\bar v_k\} \subset
W^{1,\infty}(Q;\Rb^d)$ such that $\bar v_k=v_0$ on a neighborhood of $\partial Q$,
$\bar v_k \to v_0$
in $L^p(Q;\Rb^d)$,
the sequence of gradients $\{|\nabla \bar v_k|^p\}$ is
equi-integrable, and
\begin{multline}\label{inerst2}
\lim_{k \to +\infty} \int_Q \bar f\left(\frac{x_0 +\rho_k
y}{\e_{k}}, s_0+\rho_k v_k(y),\nabla v_k(y)\right)dy\\ \geq
\limsup_{k \to +\infty}\, \int_Q \bar f\left(\frac{x_0 +\rho_k
y}{\e_{k}}, s_0+\rho_k v_k(y),\nabla \bar v_k(y)\right)dy\,.
\end{multline}
\vskip5pt

{\bf Step 2.} Write
$$\frac{x_0}{\e_{k}}=m_k+s_k \quad \text{with } m_k \in \Zb^N
\text{ and } s_k \in [0,1)^N\,,$$ and define
\begin{equation*}
x_k:=\frac{\e_{k}}{\rho_k}s_k \to
0\quad\text{and}\quad\d_k:=\e_{k}/\rho_k\to 0\,.
\end{equation*}
By the $1$-periodicity of $\bar f$ with respect
to its first variable, \eqref{inerst} and \eqref{inerst2}, we infer
\begin{eqnarray}\label{debut}
\frac{d\mu}{d\LL^N}(x_0) & \geq & \limsup_{k \to +\infty}\, \int_Q \bar
f\left(\frac{x_k +y}{\d_k}, s_0+\rho_k v_k(y),\nabla
\bar v_k(y)\right)dy\nonumber\\
& \geq &  \limsup_{k \to +\infty}\, \int_{x_k+Q} \bar
f\left(\frac{y}{\d_k}, s_0+\rho_k v_k(y - x_k),\nabla
\bar v_k(y-x_k)\right)dy\,.
\end{eqnarray}
Extend $v_k$ by $0$, and $\bar v_k$ by $v_0$ to the whole $\Rb^N$. As $x_k
\to 0$ it follows that $\LL^N((Q-x_k) \triangle Q) \to 0$, and the
equi-integrability of $\{|\nabla \bar v_k|^p\}$ together with the
$p$-growth condition (\ref{growthfbar}) implies
$$\int_{Q \triangle (x_k+Q)}\bar
f\left(\frac{y}{\d_k}, s_0+\rho_k v_k(y - x_k),\nabla
\bar v_k(y-x_k)\right)dy \leq \b'\int_{(Q-x_k) \triangle Q}(1+|\nabla
\bar v_k|^p)\, dy \to 0\,.$$
Hence (\ref{debut}) yields
\begin{equation}\label{debut1}
\frac{d\mu}{d\LL^N}(x_0) \geq \limsup_{k \to +\infty}\, \int_Q \bar
f\left(\frac{y}{\d_k}, s_0+\rho_k w_k,\nabla \bar w_k \right)dy\,,
\end{equation}
where $w_k(y):=v_k(y-x_k)$ and $\bar w_k(y):=\bar v_k(y-x_k)$ converge to $v_0$ in $L^p(Q;\Rb^d)$,
and $\{|\nabla \bar w_k|^p\}$ is equi-integrable as well.
\vskip5pt

{\bf Step 3.} For $M>1$ and $k \in\Nb$, consider the set $E_{M,k}:=\{x \in Q :
|\nabla \bar w_k|\leq M\}$. By Chebyschev inequality, (\ref{debut1})
and (\ref{growthfbar}),  $\LL^N(Q
\setminus E_{M,k}) \leq C/M^p$ for some constant $C>0$ independent
of $k$ and $M$.

By the Scorza-Dragoni Theorem (see \cite{ET}, p. 235), for any
$\eta>0$, we may find a compact set $K_\eta \subset \overline Q$
such that $\LL^N(\overline Q\setminus K_\eta)<\eta$ and $f:K_\eta
\times \Rb^{d \times N} \to [0,+\infty)$ is continuous. In
particular the restriction of $\bar f(\cdot,s,\cdot)$ to $K_\eta
\times B^{d \times N}(0,M)$ is uniformly continuous for every
$s\in\M$.  Therefore the function $\Psi_{\eta,M} : [0,+\infty) \to
[0,+\infty)$ defined by
$$\Psi_{\eta,M}(t)=\sup\bigg\{|f(y,\xi)-f(y,\xi')|\; :\; y\in K_\eta,\;
\xi,\,\xi'\in B^{d\times N}(0,M)\,,\; |\xi-\xi'|\leq t\bigg\}\,,$$
is continuous, satisfies $\Psi_{\eta,M}(0)=0$, and is bounded. In view of
\eqref{deffbar}, we have
$$|\bar f(y,s_1,\xi)-\bar
f(y,s_2,\xi)|\leq
\Psi_{\eta,M}\big(M|\mathbf{P}_{s_1}-\mathbf{P}_{s_2}|\big)+C_M|\mathbf{P}_{s_1}-\mathbf{P}_{s_2}|
=:\tilde \Psi_{\eta,M}
\big(|\mathbf{P}_{s_1}-\mathbf{P}_{s_2}|\big)$$
for every $y\in
K_\eta$, $s_1,s_2\in \M$ and $\xi\in B^{d \times N}(0,M)$, where the
constant $C_M>0$ only depends on $M$ and $p$. Define
$$K_\eta^{\rm per}:= \bigcup_{\ell\in\Zb^N}\big(\ell+K_\eta\big)\,.$$
Since $\bar f$ is 1-periodic in the first variable,
\begin{equation}\label{conts}
|\bar f(y,s_1,\xi)-\bar f(y,s_2,\xi)|\leq \tilde \Psi_{\eta,M}
\big(|\mathbf{P}_{s_1}-\mathbf{P}_{s_2}|\big)
\end{equation}
for every $y\in K_\eta^{\rm per}$, $s_1,s_2\in\M$ and $\xi\in
B^{d\times N}(0,M)\,$. From (\ref{debut1}) and (\ref{conts}) it
follows that
\begin{align*}
\frac{d\mu}{d\LL^N}(x_0) & \geq  \limsup_{k \to +\infty}
\int_{E_{M,k} \cap (\d_k K^{\rm per}_\eta) } \bar
f\left(\frac{y}{\d_k},
s_0+\rho_k w_k,\nabla \bar w_k \right)dy\\
&
\begin{multlined}[10cm]
\geq \limsup_{k \to +\infty} \int_{E_{M,k} \cap (\d_k K^{\rm
per}_\eta) } \bar f\left(\frac{y}{\d_k}, s_0,\nabla \bar w_k
\right)dy \,-\\
-\limsup_{k \to +\infty} \int_Q \tilde \Psi_{\eta,M}
\big(|\mathbf{P}_{s_0 + \rho_k w_k(y)}-\mathbf{P}_{s_0}|\big)\, dy\,.
\end{multlined}
\end{align*}
Since $\tilde \Psi_{\eta,M}$ is continuous and bounded, $\tilde
\Psi_{\eta,M}(0)=0$, and (up to a subsequence) $\mathbf{P}_{s_0+\rho_k w_k(y)} \to
\mathbf{P}_{s_0}$ for a.e. $y\in Q$, we obtain by Dominated
Convergence that
$$\lim_{k\to+\infty} \int_{Q}\tilde \Psi_{\eta,M} \big(|\mathbf P_{s_0+\rho_k
w_k(y)}-\mathbf P_{s_0}|\big)\, dy=0\,,$$ and thus
\begin{equation}\label{debut2}
\frac{d\mu}{d\LL^N}(x_0) \geq \limsup_{k \to +\infty}
\int_{E_{M,k} \cap (\d_k K^{\rm per}_\eta) } \bar
f\left(\frac{y}{\d_k}, s_0,\nabla \bar w_k \right)dy\,.
\end{equation}
From
the $p$-growth condition (\ref{growthfbar}) and the Riemann-Lebesgue
Lemma, we deduce that
\begin{multline*}
 \limsup_{k \to +\infty}
\int_{E_{M,k} \setminus (\d_k K_\eta^{\rm per})}\bar f\left(\frac{y}{\d_k}, s_0,\nabla \bar w_k\right)dy   \leq  \limsup_{k
\to +\infty} \,\b'(1+M^p)\LL^N(Q \setminus (\d_k K^{\rm
per}_\eta))=\\
 =  \b'(1+M^p)\LL^N(Q \setminus K_\eta) \leq
\b'(1+M^p)\eta\,.\end{multline*}
Hence (\ref{debut2}) yields
$$\frac{d\mu}{d\LL^N}(x_0) \geq \limsup_{k \to +\infty}
\int_{E_{M,k}} \bar f\left(\frac{y}{\d_k}, s_0,\nabla \bar w_k
\right)dy - \b'(1+M^p)\eta\,,$$ and sending $\eta \to 0$, we derive
\begin{equation}\label{debut3}\frac{d\mu}{d\LL^N}(x_0) \geq \limsup_{k \to +\infty}
\int_{E_{M,k}} \bar f\left(\frac{y}{\d_k}, s_0,\nabla \bar w_k
\right)dy\,.
\end{equation}
Since $\LL^N(Q \setminus E_{M,k})\to 0$ as
$M\to +\infty$ (uniformly with respect to $k$), the
equi-integrability of $\{|\nabla \bar w_k|^p\}$ and the $p$-growth
condition (\ref{growthfbar}) imply
$$\sup_{k\in\Nb}\,\int_{Q \setminus E_{M,k}}\bar
f\left(\frac{y}{\d_k}, s_0,\nabla \bar w_k\right)dy \leq \b'\sup_{k\in\Nb}\,\int_{Q
\setminus E_{M,k}}(1+|\nabla \bar w_k|^p)\, dy \longrightarrow  0\quad\text{as $M\to+\infty$}\,.$$
Plugging this estimate in
(\ref{debut3}) leads to $$\frac{d\mu}{d\LL^N}(x_0) \geq
\limsup_{k \to +\infty} \int_Q \bar f\left(\frac{y}{\d_k},
s_0,\nabla \bar w_k \right)dy\,.$$
Since $\bar w_k\to v_0$ in $L^p(Q;\Rb^d)$, we can
invoke standard homogenization results (see, {\it e.g., }
\cite[Theorem~14.5]{BD}) to infer that
$$\limsup_{k \to +\infty} \int_Q \bar f\left(\frac{y}{\d_k},
s_0,\nabla \bar w_k \right)dy\geq \int_Q \bar f_{\rm hom}(s_0,\nabla
v_0)\, dy=\bar f_{\rm hom}(s_0,\xi_0)\,.$$ In view of Proposition
\ref{properties1} we finally conclude
$$\frac{d\mu}{d\LL^N}(x_0)\geq \bar f_{\rm hom}(s_0,\xi_0)= T f_{\rm hom}(s_0,\xi_0)\,,$$
and the proof is complete.
\end{proof}

\subsection{The case of linear growth}

\noindent We now treat the case $p=1$ assuming that the function $u$
belongs to  $W^{1,1}(\O;\M)$.
In contrast with
the case $p>1$, there is no equi-integrability result as the
Decomposition Lemma. We follow here the  approach of~\cite{FM}.

\begin{lemma}\label{lowerbound1}
Assume $p=1$. Then $\F(u) \geq \F_{\rm hom}(u)$ for every $u \in W^{1,1}(\O;\M)$.
\end{lemma}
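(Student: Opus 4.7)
The plan is to run the blow-up analysis of Lemma \ref{lowerbound}, replacing the Decomposition Lemma (unavailable for $p=1$) by the Lipschitz properties \eqref{moduluscont}--\eqref{lipg} of the extension $g$ from Lemma \ref{defg}, combined with a truncation in the spirit of \cite{FM}. First, I extract a recovery sequence $\{u_n\}\subset W^{1,1}(\O;\M)$ with $u_n\to u$ in $L^1$ and $\F(u)=\lim_n\F_{\e_n}(u_n)$, pass the measures $\mu_n:=f(\cdot/\e_n,\nabla u_n)\LL^N\res\O$ to a weak-$\star$ limit $\mu\in\M(\O)$, and decompose $\mu=\mu^a+\mu^s$ with $\mu^a\ll\LL^N$. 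Since $\mu(\O)\leq\F(u)$, it suffices to establish
\[
\frac{d\mu}{d\LL^N}(x_0)\geq Tf_{\rm hom}(u(x_0),\nabla u(x_0))
\]
at $\LL^N$-a.e.\ $x_0\in\O$. I fix such an $x_0$ that is a Lebesgue point of $u$ and $\nabla u$, a point of approximate differentiability of $u$, and where the above Radon--Nikod\'ym derivative exists; I set $s_0:=u(x_0)$, $\xi_0:=\nabla u(x_0)\in[T_{s_0}(\M)]^N$, and I pick $\rho_k\searrow 0^+$ with $\mu(\partial Q(x_0,\rho_k))=0$.

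By \eqref{idfg}, $f(x/\e_n,\nabla u_n)=g(x/\e_n,u_n,\nabla u_n)$ a.e.\ in $\O$. Rescaling by $\rho_k$ around $x_0$ and setting $v_{n,k}(y):=[u_n(x_0+\rho_k y)-s_0]/\rho_k$, a diagonal choice $\e_k:=\e_{n_k}<\rho_k^2$ provides $v_k:=v_{n_k,k}$ with $v_k\to v_0(y):=\xi_0\,y$ in $L^1(Q;\Rb^d)$, $\rho_k v_k\to 0$ in $L^1$ (from the Lebesgue property), $\|\rho_k v_k\|_{L^\infty}\leq\mathrm{diam}(\M)$ (compactness of $\M$), $\sup_k\|\nabla v_k\|_{L^1(Q)}<+\infty$ (by \eqref{pgrowth}), and
\[
\frac{d\mu}{d\LL^N}(x_0)=\lim_{k\to+\infty}\int_Q g\!\left(\frac{x_0+\rho_k y}{\e_k},\,s_0+\rho_k v_k,\,\nabla v_k\right)dy.
\]
Decomposing $x_0/\e_k=m_k+s_k$ with $m_k\in\Zb^N$, $s_k\in[0,1)^N$, setting $\delta_k:=\e_k/\rho_k\to 0$ and $x_k:=\delta_k s_k\to 0$, and exploiting the $1$-periodicity of $g(\cdot,s,\xi)$, I translate to the sequence $w_k(y):=v_k(y-x_k)$. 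To avoid dealing with the symmetric difference $Q\triangle(Q+x_k)$ without equi-integrability, I localize to cubes $Q_\sigma\subset\subset Q$ and recover the loss at the end by letting $\sigma\to 0^+$.

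The critical step is to decouple the $s$-dependence, replacing $s_0+\rho_k w_k$ by $s_0$ inside $g$. By \eqref{moduluscont}, the incurred error is dominated by $C\int_{Q_\sigma}|\rho_k w_k|\,|\nabla w_k|\,dy$, for which no direct bound is available since $\{|\nabla w_k|\}$ is only bounded in $L^1$. I overcome this through a Chacon-biting / De Giorgi--Letta truncation: for every $M>0$, I produce a modified sequence $\tilde w_k\to v_0$ in $L^1$ with $|\nabla\tilde w_k|\leq M$ outside an asymptotically $\LL^N$-negligible set, use the $L^\infty$-bound $|\rho_k w_k|\leq\mathrm{diam}(\M)$ on the bad set together with the $L^1$-vanishing of $\rho_k w_k$ on the good set, and invoke \eqref{lipg} to replace $\nabla w_k$ by $\nabla\tilde w_k$ inside $g$. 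Up to an error vanishing as $k\to+\infty$ and then $M\to+\infty$, this yields
\[
\frac{d\mu}{d\LL^N}(x_0)\geq\liminf_{k\to+\infty}\int_{Q_\sigma} g(y/\delta_k,s_0,\nabla\tilde w_k)\,dy.
\]
Since $g(\cdot,s_0,\cdot)$ satisfies $(H_1)$--$(H_2)$ with $p=1$, the classical unconstrained homogenization lower bound (see \cite[Ch.~14]{BD}) applied on $Q_\sigma$ bounds this liminf below by $g_{\rm hom}(s_0,\xi_0)\,\LL^N(Q_\sigma)$, and Remark \ref{idfhomghom} identifies $g_{\rm hom}(s_0,\xi_0)=Tf_{\rm hom}(s_0,\xi_0)$. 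Letting $\sigma\to 0^+$ concludes the proof.

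The main obstacle is this decoupling step: linear growth forbids any equi-integrability upgrade for $\{|\nabla w_k|\}$, so cancelling the $s$-oscillation must combine the compactness of $\M$ (giving the $L^\infty$-bound on $\rho_k w_k$) with a biting-type reduction to bounded-gradient competitors, a maneuver characteristic of the Fonseca--M\"uller strategy from \cite{FM}.
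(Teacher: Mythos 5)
There is a genuine gap in your strategy, located precisely where you anticipate difficulty: the decoupling of the $s$-variable via a \emph{gradient} truncation does not work, and the tool you invoke (a Chacon/De Giorgi--Letta biting argument) cannot supply what is needed.

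To isolate the problem, recall that after the blow-up and rescaling the decoupling error is, via~\eqref{moduluscont},
\begin{equation*}
\int_Q \big|g(y/\d_k,s_0+\rho_k w_k,\nabla w_k)-g(y/\d_k,s_0,\nabla w_k)\big|\,dy \leq C\rho_k\int_Q |w_k|\,|\nabla w_k|\,dy\,,
\end{equation*}
and what you need is that the right-hand side tends to zero. The crude $L^\infty$-bound coming from compactness of $\M$ is on $\rho_k w_k$, not on $w_k$: it reads $\|w_k\|_{L^\infty(Q)}\leq \mathrm{diam}(\M)/\rho_k$, so that $\rho_k\|w_k\|_{L^\infty}\|\nabla w_k\|_{L^1}\leq C\,\mathrm{diam}(\M)$ is merely bounded, not vanishing. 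Truncating $\nabla w_k$ does nothing to reduce $\|w_k\|_{L^\infty}$. Moreover, if you instead try to work on the Chebyshev good set $E_{M,k}:=\{|\nabla w_k|\leq M\}$ and discard the rest (using $g\geq 0$), you are left with a lower bound by $\int_{E_{M,k}} g(y/\d_k,s_0,\nabla w_k)\,dy$ over a $k$-dependent measurable set; to convert this into a homogenization lower bound you would need a genuine $W^{1,1}$ competitor equal to $w_k$ on $E_{M,k}$ with controlled gradient elsewhere, and any such Lipschitz-truncation construction costs an error of size $\int_{Q\setminus E_{M,k}}|\nabla w_k|\,dy$, which is exactly what fails to vanish without equi-integrability in the case $p=1$. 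The biting lemma also does not help: it extracts a set of small but \emph{positive} measure outside of which one has equi-integrability, and on that residual set the energy concentration persists.

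What is actually needed --- and what the paper does, following Fonseca--M\"uller --- is a truncation of the \emph{function} $w_k$, not its gradient: define $w^k_{s,t}:=w_0+\zeta_{s,t}(|w_k-w_0|)(w_k-w_0)$, and then choose the cut levels $s_k<t_k\to 0$ via the coarea formula together with the logarithmic selection of \cite[Lemma~2.6]{FM} so that the transition-annulus contribution vanishes and the energy does not increase in the limit. This produces $\overline w_k:=w^k_{s_k,t_k}\to w_0$ \emph{in} $L^\infty(Q;\Rb^d)$ and bounded in $W^{1,1}$. With $\|\overline w_k\|_{L^\infty}=O(1)$ (crucially, not $O(1/\rho_k)$), the decoupling estimate becomes $C\rho_k\|\overline w_k\|_{L^\infty}\|\nabla\overline w_k\|_{L^1}\to 0$ and the argument closes via the unconstrained homogenization lower bound and~\eqref{f=g}. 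Your outline does not reproduce this $L^\infty$-upgrade of the rescaled sequence, and it is not attainable by operating on the gradients.
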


\begin{proof} Let $u \in W^{1,1}(\O;\M)$. By a standard diagonal argument, we first obtain a subsequence $\{\e_n\}$ (not relabeled) and
$\{u_n\} \subset W^{1,1}(\O;\M)$ such that $u_n\to  u$ in $L^1(\O;\Rb^d)$ and
$$\F(u)=\lim_{n \to +\infty}\int_\O f\left(\frac{x}{\e_n},\nabla
u_n\right) dx<+\infty\,.$$
Up to the extraction of a further subsequence, we may assume that there
exists a nonnegative Radon measure $\mu \in \M(\O)$ such that
\begin{equation}\label{w*conv}
f\left(\frac{\cdot}{\e_n},\nabla u_n\right)\LL^N\res\, \O
\xrightharpoonup[]{*} \mu \quad\text{in }\M(\O)\,.
\end{equation}
Hence it
is enough to prove that $\mu(\O) \geq \F_\hom(u)$. As in the proof of Lemma \ref{lowerbound}, it suffices to show  that
\begin{equation*}
\frac{d\mu}{d\LL^N}(x_0) \geq Tf_\hom(u(x_0),\nabla u(x_0))\quad\text{for $\LL^N$-a.e. $x_0 \in \O$}\,.
\end{equation*}
The proof will be divided into three
steps. We first apply the blow-up method which reduces
the study to affine limiting functions. Then we reproduce the argument
of \cite{FM} which enables us to replace the original
sequence by a uniformly converging one without increasing the
energy. We will conclude using a
classical homogenization result.
\vskip5pt

{\bf Step 1.} Select a point $x_0 \in \O$ which is a Lebesgue point
of $u$ and $\nabla u$, a point of approximate differentiability of
$u$ (so that $u(x_0) \in \M$, $\nabla u(x_0) \in
[T_{u(x_0)}(\M)]^N$) and such that the Radon-Nikod\'ym derivative of
$\mu$ with respect to the Lebesgue measure $\LL^N$ exists and is
finite. Note that $\LL^N$-almost every points $x_0$ in $\O$ satisfy
these properties. We write  $s_0:=u(x_0)$ and $\xi_0:=\nabla
u(x_0)$.

Up to a subsequence, we may assume that there exists a nonnegative
Radon measure $\lambda \in \M(\O)$ such that $(1+|\nabla u_n|) \LL^N
\res\, \O \xrightharpoonup[]{*} \lambda$ in $\M(\O)$. Consider a
sequence $\{\rho_k\} \searrow 0^+$ such that
$Q(x_0,2\rho_k)\subset\O$ and $\mu(\partial Q(x_0,\rho_k))
=\lambda(\partial Q(x_0,\rho_k))=0$ for each $k \in \Nb$. Then
(\ref{w*conv}) yields
\begin{eqnarray}\label{1116}
\mu(Q(x_0,\rho_k)) = \lim_{n \to +\infty} \int_{Q(x_0,\rho_k)} f
\left(\frac{x}{\e_n},\nabla u_n \right)dx\,.
\end{eqnarray}
Set $\ds \tau_n:= \e_n \big[x_0/\e_n\big]\in\e_n\Zb^N$. Since $\tau_n \to x_0$, given $r\in(1,2)$
we have $Q(\tau_n,\rho_k) \subset Q(x_0,r\rho_k)$ whenever $n$ is large enough,
and we may define for $x\in Q(0,\rho_k)$, $v_n(x):=u_n(x+\tau_n)$.
By continuity of the translation in
$L^1$, we get that
\begin{align}\label{convv}
\int_{Q(0,\rho_k)}|v_n(x)-u(x+x_0)|\, dx & =
\int_{Q(\tau_n,\rho_k)}|u_n(x)-u(x+x_0-\tau_n)|\, dx\nonumber\\
& \leq  \int_{Q(x_0,r\rho_k)}|u_n(x)-u(x+x_0-\tau_n)|\, dx
\mathop{\longrightarrow}\limits_{n \to +\infty}0\,.
\end{align}
Changing variable in (\ref{1116}) and using the periodicity
condition $(H_1)$ of $f(\cdot,\xi)$ and the growth condition
$(H_2)$, we are led to
\begin{align}\label{lastterm}
\mu(Q(x_0,\rho_k)) &=  \lim_{n \to
+\infty} \int_{Q(x_0-\tau_n,\rho_k)} f\left(\frac{x+\tau_n}{\e_n},
\nabla
u_n(x+\tau_n)\right)dx\nonumber\\
&=  \lim_{n \to +\infty} \int_{Q(x_0-\tau_n,\rho_k)}
f\left(\frac{x}{\e_n}, \nabla v_n\right)dx\nonumber\\
&
\begin{multlined}[10cm]
\geq  \limsup_{n \to +\infty} \int_{Q(0,\rho_k)}
f\left(\frac{x}{\e_n},\nabla v_n\right)dx\,-\\
-\b \limsup_{n \to +\infty}
\int_{Q(\tau_n,\rho_k) \setminus Q(x_0,\rho_k)}(1+|\nabla u_n|)\,
dx\,.
\end{multlined}
\end{align}
On the other hand, by our choice of $\rho_k$,
\begin{align*}
\limsup_{n \to +\infty} \int_{Q(\tau_n,\rho_k)\setminus
Q(x_0,\rho_k)}(1+|\nabla u_n|)\, dx & \leq  \limsup_{r \to
1^+}\,\limsup_{n \to +\infty}\int_{Q(x_0,r\rho_k) \setminus
Q(x_0,\rho_k)}(1+|\nabla
u_n|)\, dx\\
 &\leq   \limsup_{r \to 1^+}\lambda \left( \overline{Q(x_0,r \rho_k) \setminus Q(x_0,\rho_k)}\right)\\
 &\leq  \lambda(\partial Q(x_0,\rho_k))=0\,,
\end{align*}
so that the last term in (\ref{lastterm})  vanishes. Hence
$$\mu(Q(x_0,\rho_k)) \geq  \limsup_{n \to +\infty} \,
\int_{Q(0,\rho_k)} f\left(\frac{x}{\e_n},\nabla v_n \right)dx\,,$$
where $\{v_n\} \subset W^{1,1}(Q(0,\rho_k);\M)$
satisfies $v_n \to u(x_0+\cdot)$ in $L^1(Q(0,\rho_k);\Rb^d)$  by (\ref{convv}).
\vskip5pt

Now we consider for every $n$, a sequence $\{v_{n,j}\}\subset {\cal
C}^\infty\big(\overline{Q(0,\rho_k)};\Rb^d\big)$ such that
$v_{n,j}\to v_n$ in $W^{1,1}(Q(0,\rho_k);\Rb^d)$, $v_{n,j}\to v_n$
and $\nabla v_{n,j}\to\nabla v_n$ a.e. in $Q(0,\rho_k)$ as
$j\to+\infty$ (we emphasize that in general, $v_{n,j}$ is not
$\M$-valued). Considering the integrand $g$ given by Lemma
\ref{defg}, one may check
\begin{multline*}
\lim_{j\to+\infty} \int_{Q(0,\rho_k)} g\left(\frac{x}{\e_n},v_{n,j},\nabla v_{n,j}
\right)dx= \int_{Q(0,\rho_k)} g\left(\frac{x}{\e_n},v_{n},\nabla v_{n}
\right)dx\,=\\
=\int_{Q(0,\rho_k)} f\left(\frac{x}{\e_n},\nabla v_n \right)dx\,,
\end{multline*}
so that we can find a diagonal sequence $\bar v_n:=v_{n,j_n}$ satisfying
$\bar v_n \to u(x_0+\cdot)$ in $L^1(Q(0,\rho_k);\Rb^d)$ and
\begin{equation}\label{imp}
\mu(Q(x_0,\rho_k)) \geq  \limsup_{n \to +\infty} \,
\int_{Q(0,\rho_k)} g\left(\frac{x}{\e_n},\bar v_{n},\nabla \bar v_{n}
\right)dx\,.
\end{equation}
Changing variable in
(\ref{imp}) yields
\begin{align}\label{CV}
\frac{d\mu}{d\LL^N}(x_0) & \geq \limsup_{k \to +\infty}\, \limsup_{n
\to +\infty} \,\int_Q g\left(\frac{\rho_k
\,x}{\e_n},\bar v_{n}(\rho_k\, x),  \nabla \bar v_{n}(\rho_k \, x)\right)dx\nonumber\\
& =  \limsup_{k \to +\infty} \limsup_{n \to +\infty}\, \int_Q
g\left(\frac{\rho_k \, x}{\e_n},s_0+\rho_k\, w_{n,k}, \nabla
w_{n,k}\right)dx\,,
\end{align}
where we have set $w_{n,k}(x):=\big[\bar v_{n}(\rho_k\, x) - s_0
\big]/\rho_k$. Since $x_0$ is a point of approximate
differentiability of $u$ and $\bar v_{n} \to u(x_0+\cdot)$ in
$L^1(Q(0,\rho_k);\Rb^d)$, we have
\begin{equation}\label{approxdiff}
\lim_{k \to +\infty} \lim_{n \to +\infty} \int_Q |w_{n,k}(x) -
\xi_0\, x|\, dx=  \lim_{k \to +\infty} \int_{Q(x_0,\rho_k)}
\frac{|u(y) - s_0 - \xi_0\,(y-x_0)|}{\rho_k^{N+1}}\, dy=0\,.
\end{equation}
In view of (\ref{CV}) and (\ref{approxdiff}), we can find a
diagonal sequence $\e_{n_k} < \rho_k^2$ such that $w_k:=w_{n_k,k}\to w_0$
in $L^1(Q;\Rb^d)$ with $w_0(x):=\xi_0\, x$, and
\begin{equation}\label{coerc}\frac{d\mu}{d\LL^N}(x_0) \geq \limsup_{k \to +\infty}\,
\int_Q g\left(\frac{x}{\d_k},s_0+\rho_k \,w_k,\nabla w_k \right)dx\,,
\end{equation}
where $\d_k:=\e_{n_k}/\rho_k\to0$.
\vskip5pt

{\bf Step 2.} We now argue as in Step 3 of the proof of
\cite[Theorem~2.1]{FM} to show that there exists a sequence
$\{\overline w_k\} \subset W^{1,\infty}(Q;\Rb^d)$ such that
$\overline w_k \to w_0$ in $L^\infty(Q;\Rb^d)$, $\{\overline w_k\}$
is uniformly bounded in $W^{1,1}(Q;\Rb^d)$ and
\begin{equation}\label{claim}
\frac{d\mu}{d\LL^N}(x_0) \geq \limsup_{k \to +\infty}\, \int_Q
g\left(\frac{x}{\d_k},s_0+\rho_k \,\overline  w_k, \nabla \overline
w_k\right)dx\,.
\end{equation}
Given $0<s<t\,$, let $\zeta_{s,t} \in
\C_c^\infty(\Rb;[0,1])$ be a cut-off function satisfying
$\zeta_{s,t}(\tau)=1$ if $|\tau| \leq s$, $\zeta_{s,t}(\tau)=0$ if
$|\tau| \geq t$ and $|\zeta'_{s,t}| \leq C/(t-s)$. Define
$$w_{s,t}^k:=w_0 + \zeta_{s,t}(|w_k - w_0|)(w_k - w_0)\,.$$
Obviously,
\begin{equation}\label{Linfty}
\|w_{s,t}^k - w_0 \|_{L^\infty(Q;\Rb^d)} \leq t\,,
\end{equation}
and the Chain Rule formula gives
\begin{equation}\label{derive}
\nabla w_{s,t}^k =  \nabla w_0 + \zeta_{s,t}\big(|w_k - w_0|\big)
(\nabla w_k - \nabla w_0) +  \zeta'_{s,t}\big(|w_k - w_0|\big) (w_k - w_0) \otimes \nabla
|w_k-w_0|\,.
\end{equation}
In particular,
\begin{multline}\label{huge}
\int_Q g\left(\frac{x}{\d_k},s_0+\rho_k \,w_{s,t}^k,\nabla
w_{s,t}^k\right)dx = \int_{\{|w_k-w_0| \leq s\}}
g\left(\frac{x}{\d_k},s_0+\rho_k \, w_k, \nabla
w_k\right)dx\,+\\
+\int_{\{s < |w_k-w_0| \leq t\}}
g\left(\frac{x}{\d_k},s_0+\rho_k \,w_{s,t}^k,\nabla
w_{s,t}^k\right)dx\,+\\
+\int_{\{|w_k-w_0| > t\}} g\left(\frac{x}{\d_k},s_0+\rho_k\, w_0,
\xi_0 \right)dx\,.
\end{multline}
>From the growth condition (\ref{pgrowth}), we infer that
\begin{equation}\label{1}
\int_{\{|w_k-w_0| > t\}}
g\left(\frac{x}{\d_k},s_0+\rho_k\, w_0 ,\xi_0\right)dx \leq
\b'(1+|\xi_0|) \, \LL^N(\{|w_k-w_0| > t\})\,,
\end{equation}
and (\ref{derive}) yields
\begin{multline}\label{2}
\int_{\{s < |w_k-w_0| \leq t\}} g\left(\frac{x}{\d_k},s_0+\rho_k
\,w_{s,t}^k,\nabla w_{s,t}^k\right)dx \leq C\int_{\{s < |w_k-w_0| \leq t\}}(1+|\nabla w_k -
\xi_0|)\, dx\,+\\
+\frac{C}{t-s}\int_{\{s < |w_k-w_0| \leq t\}}|w_k -
w_0|\, \big|\nabla |w_k - w_0|\,\big|\, dx\,.
\end{multline}
Observe that for $\LL^1$-a.e. $t>0$,
\begin{equation}\label{family}
\lim_{s \to t^-}\int_{\{s < |w_k-w_0| \leq t\}}(1+|\nabla w_k -
\xi_0|)\, dy \leq   C_k \, \lim_{s \to
t^-}\LL^N\big(\{s < |w_k-w_0| \leq t\}\big)=0\,,
\end{equation}
and by the Coarea formula,
\begin{multline}\label{coarea}
\lim_{s \to t^-} \frac{1}{t-s}\int_{\{s < |w_k-w_0| \leq t\}}|w_k
- w_0|\, \big|\nabla |w_k - w_0|\, \big|\, dx \,=\\
=  \lim_{s \to t^-}
\frac{1}{t-s} \int_s^t\tau \HH^{N-1}(\{|w_k-w_0|=\tau\})\, d\tau
=  t \HH^{N-1}(\{|w_k-w_0|=t\})\,.
\end{multline}
Moreover, in view of (\ref{pgrowth})
and (\ref{coerc}) we infer that
$$\int_Q\big|\nabla |w_k - w_0|\, \big|\, dx \leq
C\int_Q(1+|\nabla w_k|)\, dy \leq C_0\,.$$
Applying
\cite[Lemma~2.6]{FM}, there exists $t_k \in \left(
\|w_k-w_0\|^{1/2}_{L^1(Q;\Rb^d)},\|w_k-w_0\|^{1/3}_{L^1(Q;\Rb^d)}\right)$
such that (\ref{family}) and (\ref{coarea}) hold with $t=t_k$, and
\begin{equation}\label{log}t_k \HH^{N-1}(\{|w_k-w_0|=t_k\}) \leq
\frac{C_0}{\ln\left(\|w_k-w_0\|^{-1/6}_{L^1(Q;\Rb^d)}\right)}\,.
\end{equation}
According to (\ref{family}), (\ref{coarea})
and (\ref{log}), there exists $s_k \in  \left(
\|w_k-w_0\|^{1/2}_{L^1(Q;\Rb^d)},t_k\right)$ such that
\begin{equation}\label{coareabis}
\ds \int_{\{s_k < |w_k-w_0| \leq t_k\}}(1+|\nabla
w_k - \xi_0|)\, dx\leq \frac{1}{k}\,,
\end{equation}
and
\begin{equation}\label{coareabisbisbis}
\frac{1}{t_k-s_k}\int_{\{s_k < |w_k-w_0| \leq t_k\}}|w_k -w_0|\,
\big|\nabla |w_k - w_0|\, \big|\; dx \leq
\frac{C_0}{\ln\left(\|w_k-w_0\|^{-1/6}_{L^1(Q;\Rb^d)}\right)}+\frac{1}{k}\,,
\end{equation}
while (\ref{1}) together with Chebyshev inequality yields
\begin{equation}\label{1bis}\int_{\{|w_k-w_0| > t_k\}}
g\left(\frac{x}{\d_k},s_0+\rho_k\, w_0, \xi_0 \right)dy \leq C
\|w_k-w_0\|^{1/2}_{L^1(Q;\Rb^d)}\,.
\end{equation}
Define now $\overline  w_k:=w_{s_k,t_k}^k$ so that $\overline  w_k \to w_0$ in $L^\infty(Q;\Rb^d)$ by (\ref{Linfty}).
Moreover, gathering (\ref{huge}), (\ref{2}),
(\ref{coareabis}), (\ref{coareabisbisbis}) and (\ref{1bis}), we deduce
\begin{equation*}
\limsup_{k \to +\infty}\int_Q g\left(\frac{x}{\d_k},s_0+\rho_k
\,\overline  w_k,\nabla \overline  w_k\right)dx
\leq \limsup_{k \to +\infty}\int_Q
g\left(\frac{x}{\d_k},s_0+\rho_k \,w_k,\nabla w_k\right)dx\,,
\end{equation*}
which proves (\ref{claim}). The fact that $\{\nabla \overline w_k\}$
is uniformly bounded in $L^1(Q;\Rb^{d \times N})$ is a consequence
of
(\ref{claim}) and the coercivity condition (\ref{pgrowth}).
\vskip5pt

{\bf Step 3.} Since $\{\|\overline
w_k\|_{L^\infty(Q;\Rb^d)}\}$ and $\{\|\nabla \overline
w_k\|_{L^1(Q;\Rb^{d \times N})}\}$ are uniformly bounded,  we derive from
(\ref{moduluscont}) that
$$\lim_{k \to +\infty}\int_Q
\left|\, g\left(\frac{x}{\d_k},s_0+\rho_k \, \overline  w_k, \nabla
\overline w_k\right) - g\left(\frac{x}{\d_k},s_0, \nabla \overline
w_k\right)\right| dx=0\,.$$
In view of (\ref{claim}), it leads to
$$\frac{d\mu}{d\LL^N}(x_0) \geq \lim_{k \to +\infty}
\int_Q g\left(\frac{x}{\d_k},s_0, \nabla \overline  w_k\right)dx\,.$$
Using standard homogenization results (see {\it e.g., }
\cite[Theorem~14.5]{BD}) together with (\ref{f=g}), we finally conclude that
$$\frac{d\mu}{d\LL^N}(x_0)  \geq g_\hom(s_0,\xi_0)=Tf_\hom(s_0,\xi_0)\,,$$
which completes the proof of the lemma.
\end{proof}

\subsection{Proof of Theorem \ref{babmil} and Theorem \ref{babmilp=1} completed}

Since $L^p(\O;\Rb^d)$ is separable ($1\leq p <+\infty$),  there
exists a subsequence  $\{\e_{n_k}\}$ such that $\F$ is the
$\G$-limit of $\{\F_{\e_{n_k}}\}$ for the strong
$L^p(\O;\Rb^d)$-topology (see \cite[Theorem~8.5]{DM}).
\vskip 5pt

\noindent {\it Case $p>1$. } In view of $(H_2)$ and the closure of
the pointwise constraint under strong $L^p$-convergence, we  have
$\F(u)<+\infty$ if and only if $u \in W^{1,p}(\O;\M)$. Hence, as a
consequence of Lemmas \ref{upperbound} and \ref{lowerbound},
the functionals $\{\F_{\e_{n_k}}\}$ $\G$-converge to $\F_\hom$ in $L^p(\O;\Rb^d)$.
Since the $\G$-limit does not depend on the
extracted subsequence, we get in light of \cite[Proposition~8.3]{DM}
that the whole sequence $\{\F_{\e_n}\}$ $\G$-converges to $\F_{\rm hom}$.
\vskip5pt

\noindent {\it Case $p=1$. }As a consequence
of Lemmas \ref{upperbound}
and \ref{lowerbound1},
the functionals $\{\F_{\e_{n_k}}\}$ $\G$-converge to $\F_\hom$ in $W^{1,1}(\O;\M)$.
Again, the $\G$-limit does not depend on the
extracted subsequence, so that the whole sequence $\{\F_{\e_n}\}$ $\G$-converges
to $\F_{\rm hom}$ in $W^{1,1}(\O;\M)$. \prbox

\vskip15pt

\noindent{\bf Acknowledgement. }The authors wish to thank Roberto
Alicandro, Pierre Bousquet, Giovanni Leoni and Domenico Mucci for
several interesting discussions on the subject. This work was initiated while
V. Millot was visiting the department of {\it Functional Analysis and Applications} at S.I.S.S.A.,
he thanks G.~Dal~Maso  and the whole department for
the warm hospitality. The research of
J.-F. Babadjian was partially supported by the Marie Curie Research
Training Network MRTN-CT-2004-505226 ``Multi-scale modelling and
characterisation for phase transformations in advanced materials''
(MULTIMAT). V. Millot was partially supported by  the Center for
Nonlinear Analysis (CNA) under the National Science Fundation Grant
No. 0405343.

\end{document}